\documentclass[11pt]{article}

\usepackage[utf8]{inputenc}
\usepackage[T1]{fontenc}
\usepackage{amsmath,amsthm,amssymb}
\usepackage{mathtools}
\usepackage{mleftright}
\usepackage{graphicx}
\usepackage{booktabs}
\usepackage{multirow}
\usepackage{dsfont} 
\usepackage{hyperref}
\usepackage{url}
\usepackage{csquotes}
\usepackage{tikz}
\usepackage{subcaption}
\usepackage[margin=1in]{geometry}

\usepackage[backend=bibtex,style=numeric,sorting=none]{biblatex}
\theoremstyle{plain}
\newtheorem{theorem}{Theorem}[section]
\newtheorem{lemma}[theorem]{Lemma}
\newtheorem{proposition}[theorem]{Proposition}

\newtheorem{conjecture}[theorem]{Conjecture}
\newtheorem{observation}[theorem]{Observation}

\theoremstyle{definition}
\newtheorem{definition}[theorem]{Definition}
\newtheorem{example}[theorem]{Example}

\theoremstyle{remark}
\newtheorem{remark}[theorem]{Remark}

\newcommand{\ind}{\mathds{1}}

\title{Hierarchical threshold structure in Max-Cut with geometric edge weights}

\author{Nevena Mari\'c\\
\small School of Computing, Union University, Belgrade, Serbia\\
\small \texttt{nmaric@raf.rs}}

\date{}

\newcommand{\paperinfo}{%
\vspace{1em}
\noindent\textbf{Keywords:} Maximum cut, weighted complete graphs, threshold polynomials, hierarchical structure, cut polytope, parametric optimization

\noindent\textbf{2020 Mathematics Subject Classification:} Primary 05C70; Secondary 90C27, 05C22
\vspace{1em}
}

\begin{document}

\maketitle

\begin{abstract}
We study a family of weighted Max-Cut instances on the complete graph $K_n$ in which 
edge weights decrease geometrically in lexicographic order: the $i$-th edge has weight 
$r^{N-i}$ where $N=\binom{n}{2}$. For $r\ge 2$, the lexicographically first cut is optimal; 
for $r=1$, all edges have equal weight and the balanced partition wins. In this paper 
we study the intermediate regime $1< r <2$.

The geometric weighting makes early edges dominant and singles out the 
\emph{$k$-isolated cuts} $C_k=\{1,\dots,k\}\mid\{k+1,\dots,n\}$ as natural candidates 
for optimality. For each $n$ and $k\le\lfloor n/2\rfloor-1$, we define threshold 
polynomials $P^{n,k}(r)$ whose unique roots $r_k(n)\in(1,2)$ determine when $C_k$ and 
$C_{k+1}$ exchange dominance. We prove that, for fixed $n$, these thresholds are strictly decreasing in $k$ and that
$r_k(n)\to 1$ as $n\to\infty$.  As our main result, we show that for $r\in(r_k(n),r_{k-1}(n))$ the cut $C_k$ achieves 
maximum weight among all isolated cuts, yielding a sharp \enquote{phase diagram} for the isolated-cut family.

We conjecture that isolated cuts are globally optimal among all 
$2^{n-1}$ cuts when $n\ge 7$; all counterexamples for small $n$ are characterized 
completely, and extensive computations for $n\le 100$ support the conjecture.
\end{abstract}

\paperinfo

\section{Introduction}

Consider a graph with weighted edges. The maximum cut problem (Max-Cut) is to partition 
the vertices into two sets so as to maximize the total weight of edges between them. While NP-hard in general, structured instances sometimes admit explicit solutions.

In this paper we study one such family on the complete graph $K_n$. 
Order the edges lexicographically
\[
(1,2),(1,3),\ldots,(1,n),(2,3),\ldots,(n-1,n),
\]
and assign the $i$-th edge the weight $r^{N-i}$, where $N=\binom{n}{2}$ and $r>1$.
Thus weights decrease geometrically along the lexicographic order, with $(1,2)$ receiving
the maximum weight $r^{N-1}$. Related lexicographic cost functions appear, for example,
in the study of local search complexity~\cite{scheder2025pls}.

The geometric weighting favors edges involving small-index vertices, which suggests 
the \emph{$k$-isolated cuts} $C_k$ - partitions $\{1,\ldots,k\}\mid\{k+1,\ldots,n\}$ - as 
natural candidates for optimality.

The extreme regimes are straightforward. When $r\ge 2$, the weight of $(1,2)$ exceeds the sum
of all remaining weights, hence $C_1$ is optimal. When $r=1$, all edges have equal weight, 
and the balanced cut $C_{\lfloor n/2 \rfloor}$ wins. The main question is what happens 
in the intermediate regime $1<r<2$.

Our first contribution is an explicit threshold description within this family. For each
$n$ and each $k\le \lfloor n/2\rfloor-1$, we define a threshold polynomial $P^{n,k}(r)$
whose unique root $r_k(n)\in(1,2)$ marks the parameter value at which $C_k$ and $C_{k+1}$
exchange dominance. The thresholds satisfy
\[
r_1(n) > r_2(n) > \cdots > r_{\lfloor n/2\rfloor-1}(n) > 1,
\]
and $r_k(n)\to 1$ as $n\to\infty$. As our main result, we prove that for $r$ between
consecutive thresholds, the corresponding cut $C_k$ maximizes the weight among all isolated cuts,
yielding an explicit description of the best isolated cut as a function of $r$.

Whether an isolated cut is globally optimal among all $2^{n-1}$ cuts remains open. We conjecture
that this holds for $n\ge 7$; exhaustive enumeration for $n \le 21$ and targeted verification 
for $n \le 100$ support this conjecture. The bound $n \ge 7$ is sharp: for $n\in\{4,5,6\}$ 
we identify counterexamples, given by near-isolated cuts of the form $\{1,\ldots,k,n\}$.

We also compare the exact optimum for this family with standard general Max-Cut bounds and show
that such bounds remain far from tight in this structured setting; see Section~\ref{sec:conclusion}.

\medskip
\noindent\textbf{The rest of the paper is organized as follows.}
Section~\ref{sec:prelim} sets notation and derives closed forms for the cut weights used throughout.
Section~\ref{sec:observations} summarizes initial observations and provides motivation for the main result.
Section~\ref{sec:thresholds} defines the threshold polynomials and proves their uniqueness and recursion, leading to the resulting threshold structure among isolated cuts; it also contains our main result on optimality within the isolated family.
Section~\ref{sec:global} discusses the global-optimality conjecture, including a complete classification of the counterexamples for $n\in\{4,5,6\}$ and supporting computations.
Section~\ref{sec:conclusion} compares our findings with general Max-Cut bounds and outlines further questions.
Heuristic derivations and additional computational details are provided in Appendices~A and~B, respectively.

\section{Preliminaries}\label{sec:prelim}

Let $K_n$ be the complete graph on $n$ vertices $[n]=\{1,\ldots,n\}$ with $N = \binom{n}{2}$ edges. 
A \textbf{cut vector} $\delta(S)$ of $K_n$ is defined for every $S \subseteq [n]$ by
$\delta(S)_{ij} = 1$ if $|S \cap \{i,j\}| = 1$ and $0$ otherwise.  
Each cut corresponds to a partition $S \cup S^c$ of $[n]$, with $2^{n-1}$ distinct cuts up to 
complementation. We order edges lexicographically: $(1,2), (1,3), \ldots, (1,n), (2,3), \ldots, (n-1,n)$.
The edge $(i,j)$ with $i < j$ has lexicographic index
\begin{equation}\label{eq:edge_index}
\mathrm{idx}(i,j) = (i-1)n - \binom{i}{2} + (j-i).
\end{equation}

Each cut corresponds to a $0$-$1$ $n$-tuple where $1$ at position $i$ indicates that vertex $i$ 
belongs to the cut. Among the two complementary representations, we choose the one that includes 
vertex~1. For an $n$-tuple ${\bf{x}} = (x_1,\ldots, x_n)$ with $x_1 = 1$, the cut vector is
\begin{align} \label{def:indikatori}
\delta({\bf{x}}) = (\ind(x_1 \neq x_2), \ind(x_1 \neq x_3), \ldots, \ind(x_{n-1} \neq x_n)) \in \{0,1\}^N,
\end{align}
where $\ind(A) = 1$ if $A$ is true and $0$ otherwise. This extends the notation $\delta(S)$ to binary strings, where $x_i = \ind(i \in S)$. 
If the $\bf{x}$'s are ordered 
lexicographically, then the corresponding $\delta(\bf{x})$'s appear in reverse lexicographic 
order. This follows from \cite{maric2025explicit}, 
where lexicographic order of $\mathbf{1} - \delta(\bf{x})$ is established. Table~\ref{table:n4} shows all cut vectors for $n=4$.

Note that we identify a 0-1 vector with the corresponding 0-1 string when no ambiguity arises. We write $1^i 0^j$ for a string of $i$ ones followed by $j$ zeros; for example, $1^k 0^{n-k}$ denotes $k$ ones followed by $n-k$ zeros.

\begin{table}[!ht]
\centering
\begin{tabular}{|c|c|c|}
\hline
order & $\bf{x}$ & cut vector \,$ \delta(\bf{x})$ \\
\hline
1 & 1000 & 111000  \\
2& 1001 & 110011  \\
3 & 1010 & 101101   \\
4& 1011 & 100110  \\
5 & 1100 & 011110  \\
6 & 1101 & 010101  \\
7& 1110 & 001011  \\
8& 1111 & 000000  \\
\hline
\end{tabular}
\caption{Cut vectors for $n=4$. The binary representation $\bf{x}$ and corresponding cut vector $\delta(\bf{x})$ are shown for all $2^{n-1}=8$ cuts.}
\label{table:n4}
\end{table}

\begin{definition}
The \emph{$k$-isolated cut} $C_k$ is the cut vector corresponding 
to the partition $\{1,\ldots,k\} \mid \{k+1,\ldots,n\}$. In terms of vertex indicators, $C_k = \delta(1^{k} 0^{n-k})$.
\end{definition}

\begin{remark}
Among all $2^{n-1}$ cuts ordered lexicographically by vertex membership, 
$C_k$ appears at position $1 + \sum_{j=0}^{k-2} 2^{n-2-j}$. In particular, 
$C_1$ is first, $C_2$ is at position $1 + 2^{n-2}$, and $C_3$ at position $1 + 2^{n-2} + 2^{n-3}$.
\end{remark}

\begin{definition}[Weight function]
For a parameter $r > 1$, define the geometric weight $w_i = r^{N-i}$ for the $i$-th edge 
in lexicographic order. For a cut vector $\delta \in \{0,1\}^N$, define
\[
W^n(\delta;r) = \sum_{i=1}^N \delta_i \cdot r^{N-i}.
\]
\end{definition}

We focus on the regime $1 < r < 2$.

\begin{definition} \label{def:threshold}
The \emph{threshold} $r_k(n)$ is the value of $r$ at which the $k$-isolated and $(k+1)$-isolated 
cuts have equal weight: $W^n (C_k;r_k(n)) = W^n (C_{k+1};r_k(n))$.
\end{definition}

\section{Computational Observations}\label{sec:observations}

Before developing the theory, we present computational observations that motivate the main results.
For $n \leq 21$, we evaluated cut weights over a grid of $r\in(1,2)$ (mesh size $0.001$) and tracked the maximizer. This grid-based verification supports the observations numerically but does not constitute a proof over the continuum; see Section~\ref{sec:computational} for details.

\begin{observation}\label{obs:isolated_transitions}
Fix $n\ge 7$. As $r$ increases from $1^+$ to $2^-$, the maximizer \emph{within the family of $k$-isolated cuts}
transitions successively through $C_{\lfloor n/2\rfloor}, C_{\lfloor n/2\rfloor-1},\ldots, C_1$.
\end{observation}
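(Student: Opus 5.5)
The plan is to get a closed form for $W^n(C_k;r)$, reduce the comparison of $C_k$ and $C_{k+1}$ to a single strictly monotone function of $r$, and read the ordering of the isolated cuts off the sign of consecutive differences. Only $1\le k\le \lfloor n/2\rfloor$ needs to be considered, since $C_{n-k}$ is the complement of $C_k$ and gives the same cut.

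\textbf{Step 1 (closed form).} The cut $C_k$ contains exactly the edges $(i,j)$ with $i\le k<j$. In row $i$ (the edges $(i,i+1),\dots,(i,n)$) these are the last $n-k$ edges, which occupy consecutive indices. By \eqref{eq:edge_index}, the last edge $(i,n)$ has exponent $N-\mathrm{idx}(i,n)=\binom{n-i}{2}$, the number of edges after row $i$. Summing the geometric blocks gives
\[
W^n(C_k;r)=\frac{r^{n-k}-1}{r-1}\,S_k(r),\qquad S_k(r)=\sum_{i=1}^k r^{\binom{n-i}{2}}.
\]

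\textbf{Step 2 (one threshold per consecutive pair).} Put $m=n-k-1$ and $b=\binom{m}{2}$. Then $S_{k+1}=S_k+r^{b}$, and a short computation gives
\[
(r-1)\bigl(W^n(C_k)-W^n(C_{k+1})\bigr)=(r-1)\,r^{m}S_k-(r^{m}-1)\,r^{b}.
\]
For $r>1$ this is positive if and only if
\[
f_k(r):=\frac{S_k(r)\,r^{-b}}{g_m(r)}>1,\qquad g_m(r)=\sum_{j=1}^{m} r^{-j}.
\]

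The function $f_k$ is strictly increasing on $(1,\infty)$:
\begin{itemize}
\item $S_k r^{-b}=\sum_{i\le k} r^{\binom{n-i}{2}-b}$ has all exponents $\ge 0$, so it is nondecreasing.
\item $g_m$ is strictly decreasing.
\end{itemize}

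The boundary values are:
\begin{itemize}
\item At $r=1$, $f_k(1)=k/m<1$, because $k\le\lfloor n/2\rfloor-1$ forces $k<n-k-1$.
\item At $r=2$, $g_m(2)<1\le S_k r^{-b}$, so $f_k(2)>1$.
\end{itemize}
Hence there is a unique root $r_k(n)\in(1,2)$. Moreover $W^n(C_k)>W^n(C_{k+1})$ exactly when $r>r_k(n)$, and the inequality is reversed when $r<r_k(n)$.

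\textbf{Step 3 (thresholds decrease in $k$).} I expect this to be the step needing care, and the plan is a pointwise comparison $f_{k+1}(r)>f_k(r)$ on $(1,2)$. Passing from $k$ to $k+1$ changes three things:
\begin{itemize}
\item $m$ drops by one, so $g_{m-1}<g_m$.
\item $b$ drops by $n-k-2\ge 1$, since $\binom{m}{2}-\binom{m-1}{2}=m-1$.
\item $S_{k+1}\ge S_k$.
\end{itemize}
Therefore $S_{k+1}r^{-b'}\ge S_k r^{-b}\,r^{n-k-2}>S_k r^{-b}$. Since every $f_j$ is increasing and $f_{k+1}>f_k$ pointwise, $f_{k+1}$ reaches $1$ earlier. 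This gives $r_{k+1}(n)<r_k(n)$.

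\textbf{Step 4 (conclusion).} Take $r\in(r_k(n),r_{k-1}(n))$, with the conventions $r_0=2$ and $r_{\lfloor n/2\rfloor}=1$.
\begin{itemize}
\item For $j\ge k$ we have $r>r_k\ge r_j$, so $W^n(C_j)>W^n(C_{j+1})$.
\item For $j<k$ we have $r<r_{k-1}\le r_j$, so $W^n(C_j)<W^n(C_{j+1})$.
\end{itemize}
Thus the sequence $W^n(C_1),\dots,W^n(C_{\lfloor n/2\rfloor})$ is strictly unimodal with peak at $C_k$. As $r$ increases from $1^+$ to $2^-$, it crosses $r_{\lfloor n/2\rfloor-1}<\dots<r_1$ in turn, so the maximizer passes successively through $C_{\lfloor n/2\rfloor},\dots,C_1$; the only ties are at the thresholds themselves. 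This argument does not use $n\ge 7$, so the statement holds for every $n$.
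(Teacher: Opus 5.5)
Apart from one false remark (second paragraph), your proof is correct, and it takes a genuinely different route from the paper's. The paper treats $P^{n,k}$ as a polynomial. It derives the recursion $P^{n,k}=r^{N-k}+P^{n-1,k-1}$ (Theorem~\ref{thm:recursion}) and expands it into $\pm1$ coefficients, with all positive exponents above all negative ones (Lemma~\ref{lem:separation}). It gets uniqueness of the root in $(1,2)$ from Descartes' rule of signs. It orders the thresholds through the telescoped inequality $rP^{n,k+1}-P^{n,k}=rP^{m,2}-P^{m,1}>0$ (Proposition~\ref{cor:reduction_base}). You instead factor each isolated cut row by row, $W^n(C_k;r)=\frac{r^{n-k}-1}{r-1}S_k(r)$. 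This turns the sign of $P^{n,k}$ into a comparison of one strictly increasing ratio $f_k$ with $1$. Uniqueness then follows from monotonicity rather than a sign-change count. The ordering $r_{k+1}(n)<r_k(n)$ follows from the pointwise inequality $f_{k+1}>f_k$, using $S_{k+1}\ge S_k$, the smaller normalizing exponent $b$, and $g_{m-1}<g_m$. Your Step 4 is the same chaining argument as in Theorem~\ref{thm:isolated_optimality}. Your route is shorter and self-contained, and it is valid for every $n\ge 4$ rather than under the paper's standing hypothesis $n\ge 6$. Your convention $r_{\lfloor n/2\rfloor}(n)=1$ also fixes the endpoint of the last interval, which the paper leaves undefined. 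In exchange, the paper's route produces the explicit exponent structure of $P^{n,k}$, which it reuses for the scaling heuristic in Appendix~\ref{app:scaling}.

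The false remark is your justification for restricting to $k\le\lfloor n/2\rfloor$: $C_{n-k}$ is \emph{not} the complement of $C_k$. The complement of $\{1,\dots,k\}$ is $\{k+1,\dots,n\}$, which gives $C_k$ itself. By contrast, $C_{n-k}=\{1,\dots,n-k\}\mid\{n-k+1,\dots,n\}$ is the mirror image of $C_k$. Reversing the vertex order is not a symmetry of the lexicographic geometric weights, so $C_{n-k}$ is a different cut with a different weight; for $n=4$, Table~\ref{table:n4} gives $C_1=111000$ but $C_3=001011$. The paper itself takes the family to be $C_1,\dots,C_{\lfloor n/2\rfloor}$, so this does not affect the comparison. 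To cover all isolated cuts $C_1,\dots,C_{n-1}$, replace the remark with a consequence of your own Step 2. For $\lfloor n/2\rfloor\le k\le n-2$ you have $m=n-k-1\ge 1$ and $f_k(1)=k/(n-k-1)\ge 1$. So $f_k>1$ on $(1,\infty)$, and $W^n(C_k;r)>W^n(C_{k+1};r)$ for every $r>1$. Hence $C_{\lfloor n/2\rfloor}$ beats every $C_j$ with $j>\lfloor n/2\rfloor$, and the maximizer over the full family is the one you identified.
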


Table~\ref{tab:transitions} illustrates Observation~\ref{obs:isolated_transitions} for $n=7,\dots,12$.
The rightmost column lists the sequence of values of $k$ for which the $k$-isolated cut $C_k$
maximizes the weight (over the relevant range of $r$). 
The arrows indicate the order in which the maximizing
isolated cut changes as $r$ increases from $1^+$ to $2^-$.

\begin{table}[!ht]
\centering
\caption{Observed transition sequences of the maximizing isolated cut $C_k$ as $r$ increases in $(1,2)$, for $n=7$ to $n=12$.}
\label{tab:transitions}
\begin{tabular}{c|c}
\toprule
$n$ & $k$ values \\
\midrule
7  &  3 $\to$ 2 $\to$ 1 \\
8  &  4 $\to$ 3 $\to$ 2 $\to$ 1 \\
9  &  4 $\to$ 3 $\to$ 2 $\to$ 1 \\
10 &  5 $\to$ 4 $\to$ 3 $\to$ 2 $\to$ 1 \\
11 &  5 $\to$ 4 $\to$ 3 $\to$ 2 $\to$ 1 \\
12 &  6 $\to$ 5 $\to$ 4 $\to$ 3 $\to$ 2 $\to$ 1 \\
\bottomrule
\end{tabular}
\end{table}

The transitions occur at threshold values $r_k(n)$ from Definition~\ref{def:threshold}; theorems in Section~\ref{sec:thresholds} show that,
within the isolated family, these thresholds are unique and strictly ordered.

Section~\ref{sec:thresholds} provides a rigorous threshold description of this behavior and proves
that, between consecutive thresholds, the corresponding cut $C_k$ is optimal among all isolated cuts.

For $n \leq 6$, non-isolated cuts can be optimal in intervals near $r=1$; these cases are treated in
Section~\ref{sec:global}. A stronger global-optimality statement, suggested by exhaustive enumeration for $n\le 21$
and additional verification for $n\le 100$, is formulated as a conjecture in Section~\ref{sec:global}.


\section{Polynomial Characterization}\label{sec:thresholds}

The computational observations suggest that threshold values play a fundamental role. We now develop the mathematical machinery to characterize these thresholds precisely.

\begin{definition}
For $k \geq 1$, define the \emph{threshold polynomial}
\[
P^{n,k}(r) = W^n(C_k;r) - W^n(C_{k+1};r).
\]
\end{definition}

\begin{table}[!ht]
\centering
\caption{Comparison of cut vectors for $k=2$ and $k=3$ isolated cuts ($n=7$)}
\begin{tabular}{r|c|l}
\hline
k & $\mathbf{x}$ & Cut vector $C_k=\delta(\mathbf{x})$ \\
\hline
$2$ & \texttt{1100000} & 
\texttt{0\,1\,1\,1\,1\,1\,\,1\,1\,1\,1\,1\,\,0\,0\,0\,0\,\,0\,0\,0\,\,0\,0\,\,0} \\
\hline
$3$ & \texttt{1110000} & 
\texttt{0\,0\,1\,1\,1\,1\,\,0\,1\,1\,1\,1\,\,1\,1\,1\,1\,\,0\,0\,0\,\,0\,0\,\,0} \\
& & \texttt{~\,$\uparrow$\,~\,~\,~\,~\,\,$\uparrow$\,~\,~\,~\,\,\,\,\,\,$\uparrow$\,$\uparrow$\,\,$\uparrow$\,$\uparrow$\,\,} \\
\hline
\end{tabular}

\vspace{0.5em}
\footnotesize
Arrows mark positions where cut vectors differ (contributing to polynomial equation).
\end{table}

The roots of $P^{n,k}(r)$ in $(1,2)$ correspond to thresholds $r_k(n)$. We begin by analyzing the simplest case.

\subsection{Base Case: $k=1$}

\begin{proposition}[Explicit Formula for $k=1$]
\label{prop:base_case}
For $n \geq 6$,
\[
P^{n,1}(r) = r^{N-1}-(r^{N-n}+\ldots+r^{N-2n+3}).
\]
\end{proposition}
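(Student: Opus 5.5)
The plan is to compute the two cut vectors $C_1=\delta(1\,0^{n-1})$ and $C_2=\delta(1^2 0^{n-2})$ edge by edge, using the index formula \eqref{eq:edge_index}, and then subtract. By \eqref{def:indikatori}, an edge $(i,j)$ is cut exactly when $x_i\neq x_j$.

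For $C_1$ the cut edges are exactly $(1,j)$ for $j=2,\dots,n$. These have indices $1,\dots,n-1$, so
\[
W^n(C_1;r)=r^{N-1}+r^{N-2}+\dots+r^{N-n+1}.
\]
For $C_2$ the cut edges are $(1,j)$ and $(2,j)$ for $j=3,\dots,n$. The edges $(1,j)$ have indices $2,\dots,n-1$. By \eqref{eq:edge_index}, $\mathrm{idx}(2,j)=n-1+(j-2)$, so the edges $(2,j)$ have indices $n,\dots,2n-3$. Hence
\[
W^n(C_2;r)=\bigl(r^{N-2}+\dots+r^{N-n+1}\bigr)+\bigl(r^{N-n}+\dots+r^{N-2n+3}\bigr).
\]

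Subtracting, the common block $r^{N-2}+\dots+r^{N-n+1}$ cancels. This leaves $P^{n,1}(r)=r^{N-1}-(r^{N-n}+\dots+r^{N-2n+3})$, which has $n-2$ negative terms, as claimed.

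The only step needing care is checking the index range of the edges $(2,j)$, which is a direct substitution into \eqref{eq:edge_index}. The computation itself works for every $n\ge 3$. The hypothesis $n\ge 6$ is presumably there only so the formula matches the regime used later, for instance so that $k=1\le\lfloor n/2\rfloor-1$ holds comfortably and the root lies in $(1,2)$. I would remark that the identity is valid without it.
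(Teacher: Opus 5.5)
Your proposal is correct and follows the same route as the paper. You write down the crossing edges of $C_1$ and $C_2$, read off their exponents, and subtract so that the common block $r^{N-2}+\dots+r^{N-n+1}$ cancels; your explicit check $\mathrm{idx}(2,j)=n+j-3$ simply makes precise what the paper reads off from the string $0\,1^{n-2}1^{n-2}0^{N-2n+3}$, and you are right that the identity itself holds for every $n\ge 3$.
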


\begin{proof}
\begin{eqnarray*}
W^n(C_1;r)&=& W^n(1^{(n-1)}0^{(N-n+1)};r) = r^{N-1}+\ldots + r^{N-n+1}\\
W^n(C_2;r)&= & W^n(0\,1^{(n-2)} 1^{(n-2)} 0^{(N-2n+3)};r) \\
&=& r^{N-2}+\ldots + r^{N-n+1}+r^{N-n}+\ldots+ r^{N-2n+3}
\end{eqnarray*}
Subtraction of the two polynomials finishes the proof.
\end{proof}

\begin{lemma}[Exponent Separation]\label{lem:separation}
In $P^{n,k}(r)$, all positive-coefficient terms have larger exponents than 
all negative-coefficient terms. Specifically, if $a_k$ is the smallest 
positive exponent and $b_1$ the largest negative exponent, then 
$a_k - b_1 = n - k \geq 1$.
\end{lemma}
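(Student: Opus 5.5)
The plan is to compute $P^{n,k}(r)=W^n(C_k;r)-W^n(C_{k+1};r)$ edge by edge, using the index formula \eqref{eq:edge_index}, and to read off which edges contribute with sign $+1$ and which with sign $-1$. An edge $(i,j)$ with $i<j$ is cut by $C_k$ exactly when $i\le k<j$, and by $C_{k+1}$ exactly when $i\le k+1<j$. The two cut vectors agree on every edge except two families.

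First, the edges $(i,k+1)$ with $1\le i\le k$ are cut by $C_k$ but not by $C_{k+1}$. These give the positive terms $r^{N-\mathrm{idx}(i,k+1)}$.

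Second, the edges $(k+1,j)$ with $k+2\le j\le n$ are cut by $C_{k+1}$ but not by $C_k$. These give the negative terms $-r^{N-\mathrm{idx}(k+1,j)}$.

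The two families are disjoint sets of edges, so no cancellation occurs and every coefficient of $P^{n,k}$ is $0$ or $\pm1$. For $k=1$ this recovers Proposition~\ref{prop:base_case}: the positive term is the edge $(1,2)$, and the negative terms are $(2,3),\dots,(2,n)$.

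Next I will compare positions in the lexicographic order. Every positive edge has first coordinate $i\le k$, and every negative edge has first coordinate $k+1$. Hence every positive edge precedes every negative edge lexicographically, so its index is smaller and its exponent $N-\mathrm{idx}$ is larger. This proves the separation claim.

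For the exact gap, the smallest positive exponent comes from the positive edge with largest index, namely $(k,k+1)$. The largest negative exponent comes from the negative edge with smallest index, namely $(k+1,k+2)$. Thus
\[
a_k-b_1=\mathrm{idx}(k+1,k+2)-\mathrm{idx}(k,k+1).
\]
The edges from $(k,k+1)$ through $(k,n)$ form the whole block of row $k$, which has $n-k$ entries. Edge $(k+1,k+2)$ immediately follows $(k,n)$, so $\mathrm{idx}(k+1,k+2)=\mathrm{idx}(k,k+1)+(n-k)$. One can also check this directly from \eqref{eq:edge_index}: the difference is $n-\binom{k+1}{2}+\binom{k}{2}+1-1=n-k$. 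Since $k\le\lfloor n/2\rfloor-1<n$, we get $n-k\ge1$.

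The only point needing care is nondegeneracy: both families must be nonempty. This holds because $k\ge1$ and $k+2\le n$. I do not expect any step to be a real obstacle; the main risk is an off-by-one error in the index bookkeeping, which the direct check from \eqref{eq:edge_index} guards against.
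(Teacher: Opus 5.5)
Your proposal is correct and follows essentially the same route as the paper. You identify the positive edges $(i,k+1)$ and the negative edges $(k+1,j)$, locate the extremal edges $(k,k+1)$ and $(k+1,k+2)$, and read off the gap $n-k$ from the index formula. Your lexicographic argument for full separation (first coordinate $\le k$ versus $=k+1$) and your block-counting check make explicit two steps the paper leaves implicit.
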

\begin{proof}
The positive terms in $P^{n,k}$ correspond to edges $(i, k+1)$ for $i = 1, \ldots, k$, 
and the negative terms to edges $(k+1, j)$ for $j = k+2, \ldots, n$.

The smallest positive exponent is for edge $(k, k+1)$, and the largest negative 
exponent is for edge $(k+1, k+2)$. By~\eqref{eq:edge_index}, edge $(i,j)$ has exponent $N - \mathrm{idx}(i,j)$. 
Substituting $(i,j) = (k, k+1)$ and $(i,j) = (k+1, k+2)$:
\begin{align*}
a_k &= N - 1 - \frac{(k-1)(2n-k)}{2}, \\
b_1 &= N - 1 - \frac{k(2n-k-1)}{2}.
\end{align*}
Thus
\[
a_k - b_1 = \frac{k(2n-k-1) - (k-1)(2n-k)}{2} = \frac{2n - 2k}{2} = n - k \geq 1. \qedhere
\]
\end{proof}

This separation property is crucial for applying Descartes' Rule of Signs to establish uniqueness of roots.

\begin{theorem}[Properties of $r_1(n)$] 
\label{thm:r1_properties}
\begin{enumerate}
\item[(a)] $P^{n,1}(r)$  has a unique root $r_1(n) \in (1,2)$.
\item[(b)] $r_1(n)$ is strictly decreasing in $n$.
\item[(c)] $\lim_{n \to \infty} r_1(n) = 1$.
\end{enumerate}
\end{theorem}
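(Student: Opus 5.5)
Dividing by the smallest power makes everything explicit. By Proposition~\ref{prop:base_case},
\[
P^{n,1}(r)=r^{N-2n+3}\bigl(r^{2n-4}-(r^{n-3}+\cdots+1)\bigr).
\]
So on $r>1$ the roots of $P^{n,1}$ are exactly those of
\[
Q_n(r)=r^{2n-4}-\sum_{j=0}^{n-3}r^j .
\]
Equivalently, multiplying by $r-1>0$, they are the roots of $f_n(r)=r^{2n-4}(r-1)-r^{n-2}+1$.

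\emph{Part (a).} The coefficient sequence of $Q_n$ has exactly one sign change, which is the separation of Lemma~\ref{lem:separation} for $k=1$. By Descartes' Rule of Signs, $Q_n$ therefore has exactly one positive root. To place it in $(1,2)$ I check signs at the endpoints:
\begin{itemize}
\item $Q_n(1)=1-(n-2)<0$ for $n\ge 4$;
\item at $r=2$, the sum is $\sum_{j=0}^{n-3}2^j=2^{n-2}-1$, so $Q_n(2)=2^{2n-4}-2^{n-2}+1>0$.
\end{itemize}
The intermediate value theorem then puts the unique positive root $r_1(n)$ in $(1,2)$. Since $Q_n$ is negative on $(0,r_1(n))$ and positive after it, we also get a sign criterion: for $\rho>1$, $Q_n(\rho)<0$ if and only if $\rho<r_1(n)$.

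\emph{Part (b).} Write $\rho=r_1(n)$ and $m=n-2$, so that $\rho^{2m}=\sum_{j=0}^{m-1}\rho^j$. Then
\[
Q_{n+1}(\rho)=\rho^{2m+2}-\sum_{j=0}^{m}\rho^j=\rho^2\sum_{j=0}^{m-1}\rho^j-\sum_{j=0}^{m}\rho^j .
\]
I need $Q_{n+1}(\rho)>0$; by the sign criterion this gives $r_1(n+1)<\rho$. Using $\sum_{j=0}^{m-1}\rho^j=\rho^{2m}$ and $\rho^m=\rho^{2m}(\rho-1)+1$, the condition becomes $\rho^{2m}(\rho^2-\rho)>1$.

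This is the step I expect to be the main obstacle: the direct comparison is tight and needs a quantitative bound on $\rho$. I would reduce it as follows. Multiplying the defining relation by $\rho-1$ gives $\rho^{2m}(\rho-1)=\rho^m-1$, so the condition reads $\rho(\rho^m-1)>1$, that is, $\rho^{m+1}-\rho>1$. This holds iff $\rho$ exceeds the root of $x^{m+1}-x-1$, a Pisot-type number. To settle it, I would evaluate $f_n$ at that root, or else verify the inequality for small $n$ directly and use $\rho^m\ge 2$-type estimates derived from the relation.

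An alternative I would try first is to show that the map $n\mapsto f_n(r)$ is eventually increasing in sign, by comparing $f_{n+1}(r)$ with $r^2 f_n(r)$, whose difference is $r^{n-1}(r-1)-(r^2-1)$.

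\emph{Part (c).} From the root relation, $\rho^{2m}<m\rho^{m}$, hence $\rho^m<m$ and so $\rho<m^{1/m}\to1$. Together with $\rho>1$ this gives $r_1(n)\to1$. In fact this bound already yields (c) without using (b).
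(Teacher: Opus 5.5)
\textbf{Parts (a) and (c).} These are correct and essentially the paper's argument: Descartes plus the intermediate value theorem for (a), and for (c) your bound $\rho^{n-2}<n-2$, a slightly weaker form of the paper's $r_1(n)^{n-1}<n-2$.

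\textbf{Part (b).} Your reduction matches the paper's. With $\rho=r_1(n)$ one has $f_{n+1}(\rho)=(\rho-1)(\rho^{n-1}-\rho-1)$, so everything hinges on $\rho^{n-1}>\rho+1$. But that is exactly where the proposal stops. You list strategies without carrying any of them out. Your ``alternative'' of comparing $f_{n+1}$ with $r^2 f_n$ is not independent: at $r=\rho$ it produces the same factor $\rho^{n-1}-\rho-1$.

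This inequality is the whole content of (b), and it genuinely depends on $n$. For $n=4$ it fails: $r_1(4)\approx 1.2207$ lies below the root $\approx 1.3247$ of $x^3=x+1$, and indeed $r_1(5)\approx 1.2499>r_1(4)$. So a proof must use a lower bound on $n$ somewhere. Your sketch never states the range in which (b) is claimed.

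\textbf{How the gap can be closed.} The paper proves $r_1(n)^{n-1}>3$ for $n\ge 7$ by checking $f_n(q)<0$ at $q=3^{1/(n-1)}$, which reduces to $q^2+6q-9<0$. It verifies the first few comparisons ($n=5,6,7$) numerically.

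Your first suggestion also works, and more cleanly. Suppose $\sigma>1$ solves $\sigma^{n-1}=\sigma+1$. Then $\sigma^{n-2}=1+1/\sigma$. Substituting into $f_n(\sigma)=\sigma^{2n-4}(\sigma-1)-\sigma^{n-2}+1$ gives
\[
\sigma^2 f_n(\sigma)=\sigma^3+\sigma^2-2\sigma-1,
\]
which is negative for $1<\sigma<2\cos(2\pi/7)\approx 1.247$. Since $\sigma$ decreases with $n$ and is about $1.2207$ at $n=5$, you get $f_n(\sigma)<0$. Hence $\sigma<\rho$, and so $\rho^{n-1}>\rho+1$, for every $n\ge 5$, with no numerical case checks. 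As written, however, this estimate is missing, so (b) is not yet proved.
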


\begin{proof}
\begin{enumerate}
\item[(a)]
\textbf{Existence.} 
We first show that a root exists by evaluating $P^{n,1}$ (using Proposition \ref{prop:base_case}) at the endpoints.

At $r=1$:
\[
P^{n,1}(1) = 1 - \sum_{i=0}^{n-3} 1 = 1 - (n-2) = 3 - n < 0, \quad \text{for } n \geq 4.
\]

At $r=2$:
\[
P^{n,1}(2) = 2^{N-1} - \sum_{i=0}^{n-3} 2^{N-n-i} = 2^{N-2} + 2^{N-n} > 0.
\]
By the Intermediate Value Theorem (IVT), $P^{n,1}$ has at least one root in $(1,2)$.

\textbf{Uniqueness.} 
We apply Descartes' Rule of Signs (see for example \cite{anderson1998descartes}). Writing $P^{n,1}(r)$ in descending order of powers:
\[
P^{n,1}(r) = r^{N-1} + 0 \cdot r^{N-2} + \cdots + 0 \cdot r^{N-n+1} - r^{N-n} - r^{N-n-1} - \cdots - r^{N-2n+3}.
\]
The sequence of nonzero coefficients is $(+1, -1, -1, \ldots, -1)$; by Lemma~\ref{lem:separation} all positive-coefficient terms have larger exponents than all negative-coefficient terms, so there is exactly one sign change. By Descartes' rule, $P^{n,1}$ has at most one positive root; for a recent application of this technique to graph polynomials, see Brown and George~\cite{brown2025roots}.

 Combined with the existence proved above, $P^{n,1}$ has exactly one positive root, and it lies in $(1,2)$. Figure~\ref{fig:polynomials} illustrates this for $n = 8$.

\item[(b)] \textbf{$r_1(n)$ is strictly decreasing in $n$.}

For $n=5,6,7$ the monotonicity is verified by direct computation: $r_1(5)=1.249852, r_1(6)= 1.243347 ,r_1(7)= 1.229318$.

We prove that $r_1(n+1) < r_1(n)$ for all $n \geq 7$.
Note that 
 \[      
 P^{n,1}(x)= \frac{x^{N-2n+3}}{x-1} f_n(x).
 \]
 where $f_n(x) = x^{2n-3} - x^{2n-4} - x^{n-2} + 1$.

Clearly $P^{n,1}(x) = 0$ if and only if $f_n(x) = 0$ for $x \in (1,2)$ and, by part (a), $ r_1(n)$ denotes the unique root of $f_n$ in $(1,2)$.

\textbf{Claim:} $r_1(n)^{n-1} > 3$ for $n \geq 7$.

Let $q_n = 3^{1/(n-1)}$. Then:
\begin{align*}
f_n(q_n) &= q_n^{2n-3} - q_n^{2n-4} - q_n^{n-2} + 1 \\
&= 3^{2-\frac{1}{n-1}} - 3^{2-\frac{2}{n-1}} - 3^{1-\frac{1}{n-1}} + 1 = \frac{9}{q_n} - \frac{9}{q_n^2} - \frac{3}{q_n} + 1 \\
&= -\frac{9}{q_n^2} +\frac{6}{q_n} +1.
\end{align*}

We need $f_n(q_n) < 0$, which is equivalent to
$$ q_n^2  + 6q_n - 9 < 0 \quad \Leftrightarrow \quad q_n < -3 + 3\sqrt{2} \approx 1.2426.$$

Since $3^{1/6} \approx 1.2009 < 1.2426$, this holds for $n \geq 7$. (Note: $3^{1/5} \approx 1.2457$.)

Since $f_n(q_n) < 0$ and $f_n(r_1(n)) = 0$, we have $r_1(n) > q_n$, thus $r_1(n)^{n-1} > 3$.

{\bf Now we evaluate $f_{n+1}(r_1(n))$.} Since $f_n(r_1(n)) = 0$, we have $r_1(n)^{2n-3} - r_1(n)^{2n-4} = r_1(n)^{n-2} -1$. Multiplying by $r_1(n)^2$:
$$r_1(n)^{2n-1} - r_1(n)^{2n-2} = r_1(n)^n -r_1(n)^2.$$

Therefore:
\begin{align*}
f_{n+1}(r_1(n)) &= r_1(n)^{2n-1} - r_1(n)^{2n-2} - r_1(n)^{n-1} + 1 \\
&= (r_1(n)^n -r_1(n)^2) - r_1(n)^{n-1} + 1 \\
&= (r_1(n) - 1)( r_1(n)^{n-1}-r_1(n)-1).
\end{align*}

For $n \geq 7$, we have $r_1(n)^{n-1} > 3 > r_1(n) + 1$ (as $r_1(n) < 2$). Thus:
$$f_{n+1}(r_1(n)) = (r_1(n) - 1) \underbrace{(r_1(n)^{n-1}-r_1(n)-1)}_{> 0} > 0.$$

Since $f_{n+1}$ is continuous with $f_{n+1}(r_1(n)) >  0$ and $f_{n+1}(2) > 0$, and since $f_{n+1}(1)=0$ and $f_{n+1}'(1)=2-n<0$ for $n \geq 7$, so that $f_{n+1}<0$ just above $1$, the Intermediate Value Theorem gives  $r_1(n+1) \in (1, r_1(n))$.

\item[(c)] \textbf{$\lim_{n \to \infty} r_1(n) = 1$.}

At the root, $r_1(n)^{N-1} = \sum_{i=0}^{n-3} r_1(n)^{N-n-i} < (n-2) \cdot r_1(n)^{N-n}$, so $r_1(n)^{n-1} < n-2$, giving $1 < r_1(n) < (n-2)^{1/(n-1)} \to 1$. The result follows by the squeeze theorem.

\end{enumerate}
\end{proof}

The decay of $r_1(n)$ toward 1 is visible in Figure~\ref{fig:rk_vs_n}.

Having established the properties for $k=1$, we now develop a recursive structure that extends these results to all $k$.

\subsection{Recursive Structure for $k \geq 2$}

When comparing cut vectors across different complete graphs, we write $\delta(\mathbf{x})_{K_m}$ 
to indicate that the cut vector is computed in $K_m$. We use $\|$ to denote concatenation of binary strings.

\begin{lemma}[Cut Vector Concatenation]
\label{lem:concatenation}
The cut vector for the $k$-isolated cut in $K_n$ satisfies
\[
\delta\mleft(\underbrace{11\ldots1}_k\underbrace{00\ldots0}_{n-k}\mright)_{K_n} = \underbrace{00\ldots0}_{k-1}\underbrace{11\ldots1}_{n-k} \,\|\, \delta\mleft(\underbrace{11\ldots1}_{k-1}\underbrace{00\ldots0}_{n-k}\mright)_{K_{n-1}}.
\]
\end{lemma}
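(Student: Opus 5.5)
The identity follows from splitting the lexicographic edge list of $K_n$ into the first block, the $n-1$ edges incident to vertex $1$, and the remaining $\binom{n-1}{2}$ edges among $\{2,\dots,n\}$. By \eqref{eq:edge_index}, the edges $(1,j)$ for $j=2,\dots,n$ occupy indices $1,\dots,n-1$ in the order $j=2,\dots,n$. The edges $(i,j)$ with $2\le i<j\le n$ occupy indices $n,\dots,N$. I will first verify that within this second block the order is exactly the lexicographic order of $K_{n-1}$ under the relabeling $v\mapsto v-1$. This holds because lexicographic order on pairs is preserved by a shift of both coordinates.

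\textbf{First block.} Take $\mathbf{x}=1^k0^{n-k}$. The entries $\ind(x_1\neq x_j)$ for $j=2,\dots,n$ equal $0$ for $j\le k$, since $x_j=1=x_1$. They equal $1$ for $j\ge k+1$. This gives the prefix $0^{k-1}1^{n-k}$, of length $(k-1)+(n-k)=n-1$ as required.

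\textbf{Second block.} For $2\le i<j\le n$ the entry is $\ind(x_i\neq x_j)$. Writing $\mathbf{y}=(x_2,\dots,x_n)=1^{k-1}0^{n-k}$, a string of length $n-1$, this entry equals $\ind(y_{i-1}\neq y_{j-1})$. By the shift observation above, these entries listed in order form exactly $\delta(\mathbf{y})_{K_{n-1}}$ as defined in \eqref{def:indikatori}.

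The only point needing care is the normalization convention that the representing string starts with $1$. This is satisfied by $\mathbf{y}$ when $k\ge2$. When $k=1$, $\mathbf{y}=0^{n-1}$ and the definition of $\delta$ still applies verbatim as a formula, yielding the zero vector. That matches the fact that $C_1$ has no cut edges off vertex $1$. I expect this index bookkeeping, rather than any real obstacle, to be the main step. A quick sanity check is the $n=7$ table for $C_3$: its prefix is $001111$, followed by $\delta(110000)_{K_6}$.
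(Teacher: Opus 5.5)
Your proof is correct and follows essentially the same route as the paper: split the lexicographic edge list into the $n-1$ edges at vertex $1$ and the edges of $K_{n-1}$ on $\{2,\ldots,n\}$, read off the prefix $0^{k-1}1^{n-k}$, and identify the tail with $\delta(1^{k-1}0^{n-k})_{K_{n-1}}$. Your explicit check that the relabeling $v\mapsto v-1$ preserves lexicographic order, and your remark on the $k=1$ normalization, make precise details the paper leaves implicit, but they do not change the argument.
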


\begin{proof}
Let $\mathbf{x} = (1,\ldots,1,0,\ldots,0)$ with $k$ ones followed by $n-k$ zeros. The cut vector $\delta(\mathbf{x})$ has $\binom{n}{2}$ entries corresponding to edges in lexicographic order.

The first $n-1$ entries correspond to edges $(1,2), (1,3), \ldots, (1,n)$. For edge $(1,j)$:
\[
\delta(\mathbf{x})_{1j} = \ind(x_1 \neq x_j) = \ind(1 \neq x_j) = 
\begin{cases}
0 & \text{if } j \leq k, \\
1 & \text{if } j > k.
\end{cases}
\]
Thus the first $n-1$ entries are $\underbrace{00\ldots0}_{k-1}\underbrace{11\ldots1}_{n-k}$.

The remaining $\binom{n-1}{2}$ entries correspond to edges $(i,j)$ with $2 \leq i < j \leq n$. These are precisely the edges of $K_{n-1}$ on vertices $\{2, \ldots, n\}$. For such edges:
\[
\delta(\mathbf{x})_{ij} = \ind(x_i \neq x_j),
\]
which depends only on $(x_2, \ldots, x_n) = (\underbrace{1,\ldots,1}_{k-1}, \underbrace{0,\ldots,0}_{n-k})$. This is exactly $\delta(\underbrace{11\ldots1}_{k-1}\underbrace{00\ldots0}_{n-k})_{K_{n-1}}$.
\end{proof}

\begin{theorem}[Recursive Formula]
\label{thm:recursion}
For all $k \geq 2$ and $n \geq k+1$,
\[
P^{n,k}(r) = r^{N-k} + P^{n-1,k-1}(r),
\]
where $N = \binom{n}{2}$.
\end{theorem}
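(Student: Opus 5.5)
The plan is to apply Lemma~\ref{lem:concatenation} to both $C_k$ and $C_{k+1}$ in $K_n$. We then subtract the two weight expressions block by block: the first $n-1$ coordinates (edges at vertex $1$) and the remaining $\binom{n-1}{2}$ coordinates (edges of $K_{n-1}$ on $\{2,\dots,n\}$).

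\textbf{Tail block.} The edges of the copy of $K_{n-1}$ on vertices $\{2,\dots,n\}$ occupy positions $n,\dots,N$ of the lexicographic order of $K_n$, in the same relative order as in $K_{n-1}$. Write $N'=\binom{n-1}{2}=N-(n-1)$. The $j$-th edge of $K_{n-1}$ is then the $(n-1+j)$-th edge of $K_n$, and it receives weight $r^{N-(n-1)-j}=r^{N'-j}$. This is exactly its weight in the $K_{n-1}$ weighting. Hence the tail contributes precisely $W^{n-1}(\cdot\,;r)$ of the restricted cut vector, with no extra power of $r$. This index bookkeeping is the one point that needs care.

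\textbf{Prefix block and subtraction.} By the lemma, $C_k$ has prefix $0^{k-1}1^{n-k}$ and $C_{k+1}$ has prefix $0^{k}1^{n-k-1}$. These differ only in coordinate $k$, i.e.\ the edge $(1,k+1)$, which has weight $r^{N-k}$. The prefix difference is therefore $+r^{N-k}$. The tails are $\delta(1^{k-1}0^{n-k})_{K_{n-1}}$ and $\delta(1^{k}0^{n-k-1})_{K_{n-1}}$, which are $C_{k-1}$ and $C_k$ in $K_{n-1}$. Their weight difference is $P^{n-1,k-1}(r)$ by definition. Adding the two contributions gives $P^{n,k}(r)=r^{N-k}+P^{n-1,k-1}(r)$.

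\textbf{Hypotheses.} The lemma needs $k\ge 1$ and is applied at $k$ and $k+1$, which is fine for $k\ge2$. For $k+1\le n$ we need $n\ge k+1$, and then $C_k$ in $K_{n-1}$ requires $k\le n-1$, which also holds. One degenerate case remains: at $n=k+1$, $C_{k+1}$ is the trivial all-ones cut and the prefix $1^{n-k-1}$ is empty. I would check that the same computation still goes through, with the empty-string conventions stated explicitly.
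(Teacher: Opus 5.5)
Your proof is correct and follows the paper's argument: apply Lemma~\ref{lem:concatenation} to $C_k$ and $C_{k+1}$ and subtract, so the prefixes differ only at edge $(1,k+1)$, contributing $r^{N-k}$, and the tails give $P^{n-1,k-1}(r)$. You also make explicit two details the paper leaves implicit, both of which check out: the tail weights coincide with the $K_{n-1}$ weighting because $N-(n-1)=\binom{n-1}{2}$, and in the degenerate case $n=k+1$ both $C_{k+1}$ in $K_n$ and $C_k$ in $K_{n-1}$ are the zero cut vector, so the same computation holds.
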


\begin{proof}
Using Lemma \ref{lem:concatenation} for $k$ and $k+1$:
\begin{eqnarray*}
W^n(C_k; r) &=& W^n \!\mleft( \underbrace{00\ldots0}_{k-1}\underbrace{11\ldots1}_{n-k} \,\|\, \delta\mleft(\underbrace{11\ldots1}_{k-1}\underbrace{00\ldots0}_{n-k}\mright)_{K_{n-1}};\, r \mright), \\
W^n(C_{k+1}; r) &=& W^n \!\mleft( \underbrace{00\ldots00}_{k}\underbrace{1\ldots1}_{n-k-1} \,\|\, \delta\mleft(\underbrace{11\ldots1}_{k}\underbrace{00\ldots0}_{n-k-1}\mright)_{K_{n-1}};\, r \mright).
\end{eqnarray*}
Subtracting the lower from the upper expression, we obtain
\begin{equation*}
 r^{N-k}+ W^{n-1}(C_{k-1};r)-W^{n-1}(C_k;r),
\end{equation*}
and the recursive relationship follows directly.
\end{proof}

\subsection{Properties of the Hierarchy}\label{sec:monotonicity}

With the recursive formula in hand, we can now establish the key properties of the threshold sequence: uniqueness, monotonicity, and convergence.

\begin{theorem}[Unique Roots]
\label{thm:unique_roots}
For each $n \geq 6$ and $1 \leq k \leq \lfloor n/2 \rfloor - 1$, the polynomial $P^{n,k}(r)$ has a unique root $r_k(n)$ in the interval $(1,2)$.
\end{theorem}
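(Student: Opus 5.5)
Following the proof of Theorem~\ref{thm:r1_properties}(a), I split the argument into \emph{uniqueness}, via Descartes' Rule of Signs, and \emph{existence}, via the Intermediate Value Theorem on $[1,2]$.

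\textbf{Uniqueness.} By Lemma~\ref{lem:separation}, every positive-coefficient term of $P^{n,k}$ has a larger exponent than every negative-coefficient term. Ordered by descending powers, the nonzero coefficients therefore read $(+,\dots,+,-,\dots,-)$ and change sign exactly once. Descartes' rule then gives at most one positive root. It remains to confirm that there are no cancellations, i.e.\ that the coefficients really are $\pm1$ and $0$. The positive terms come from edges $(i,k+1)$ with $i\le k$; the negative terms come from edges $(k+1,j)$ with $j\ge k+2$. These edge sets are disjoint, so no term cancels.

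\textbf{Existence at $r=1$.} Here $P^{n,k}(1)$ counts edges, so
\[
P^{n,k}(1)=k(n-k)-(k+1)(n-k-1)=2k+1-n.
\]
This is negative whenever $k\le\lfloor n/2\rfloor-1$, since then $2k+1\le n-1<n$.

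\textbf{Existence at $r=2$.} Write $P^{n,k}(2)=\sum_{i=1}^k 2^{e_i}-\sum_{j=k+2}^{n}2^{f_j}$. All negative exponents lie strictly below the smallest positive exponent $a_k$, and they are distinct. Hence
\[
\sum_j 2^{f_j}\le 2^{b_1+1}-1<2^{b_1+1}\le 2^{a_k},
\]
using $a_k-b_1=n-k\ge1$ from Lemma~\ref{lem:separation}. The single term $2^{a_k}$ already dominates the negative part, so $P^{n,k}(2)>0$. This is the same ``$r\ge2$ dominates'' fact as in the introduction. The IVT then yields a root in $(1,2)$, and uniqueness among positive reals gives uniqueness in $(1,2)$.

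\textbf{Alternative via Theorem~\ref{thm:recursion}.} Unrolling the recursion gives
\[
P^{n,k}=\sum(\text{monomials})+P^{n-k+1,1}.
\]
This is an equivalent way to see both the sign pattern and the endpoint values, but the direct argument above is cleaner.

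\textbf{Main obstacle.} The only delicate point is that Descartes' rule needs the coefficient sign pattern to be exactly one block of $+$ followed by one block of $-$. Lemma~\ref{lem:separation} supplies this once we check that the two edge sets do not overlap. The hypothesis $n\ge6$ is inherited and not essential: the range $k\le\lfloor n/2\rfloor-1$ is exactly what is needed for $P^{n,k}(1)<0$.
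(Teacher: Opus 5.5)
Your proof is correct and is essentially the paper's argument. Uniqueness comes from Descartes' rule with the single sign change supplied by Lemma~\ref{lem:separation}, and existence from the IVT with $P^{n,k}(1)=2k+1-n<0$ and $P^{n,k}(2)>0$. The differences are cosmetic: you read the $\pm1$ coefficients and the value $k(n-k)-(k+1)(n-k-1)$ at $r=1$ directly from the edge-difference description rather than from the unrolled recursion, and at $r=2$ you bound the negative part by the binary sum $2^{b_1+1}-1$ instead of by $(n-k-1)2^{b_1}<2^{n-k}2^{b_1}$.
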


\begin{proof}
We prove this for all $k$ simultaneously using the recursive structure.

\textbf{Step 1: Explicit expansion.}
By repeatedly applying the recursion $P^{n,k}(r) = r^{N-k} + P^{n-1,k-1}(r)$, we obtain
\[
P^{n,k}(r) = r^{\binom{n}{2}-k} + r^{\binom{n-1}{2}-(k-1)} + \cdots + r^{\binom{m+1}{2}-2} + P^{m,1}(r),
\]
where $m = n - k + 1$. Substituting the base case formula for $P^{m,1}(r)$:
\[
P^{n,k}(r) = \underbrace{r^{\binom{n}{2}-k} + \cdots + r^{\binom{m+1}{2}-2} + r^{\binom{m}{2}-1}}_{\text{positive coefficients}} - \underbrace{\sum_{i=0}^{m-3} r^{\binom{m}{2}-m-i}}_{\text{negative coefficients}}.
\]

\textbf{Step 2: Existence of a root.}
At $r = 1$, all terms equal $1$, so
\[
P^{n,k}(1) = k  - (m-2) = k - (n-k+1) + 2 = 2k - n + 1.
\]
For $k \leq \lfloor n/2 \rfloor - 1$, we have $2k \leq n - 2$, thus $P^{n,k}(1) \leq -1 < 0$.

At $r = 2$, by Lemma~\ref{lem:separation} the smallest positive exponent satisfies $a_k = b_1 + (n-k)$, so the smallest positive term alone is $2^{a_k} = 2^{n-k} \cdot 2^{b_1}$, while the entire negative sum is at most $(n-k-1) \cdot 2^{b_1} < 2^{n-k} \cdot 2^{b_1}$. Hence $P^{n,k}(2) > 0$.

By the Intermediate Value Theorem, $P^{n,k}$ has at least one root in $(1,2)$.

\textbf{Step 3: Uniqueness.}
From Step 1, all nonzero coefficients of $P^{n,k}$ are $\pm 1$. By Lemma~\ref{lem:separation}, $a_k > b_1$, so in decreasing order of exponents the coefficients have exactly one sign change. By Descartes' Rule of Signs, $P^{n,k}(r)$ has at most one positive root. Combined with existence, $P^{n,k}$ has exactly one positive root, which lies in $(1,2)$.
\end{proof}

\begin{figure}[!ht]
\centering
\includegraphics[width=0.8\textwidth]{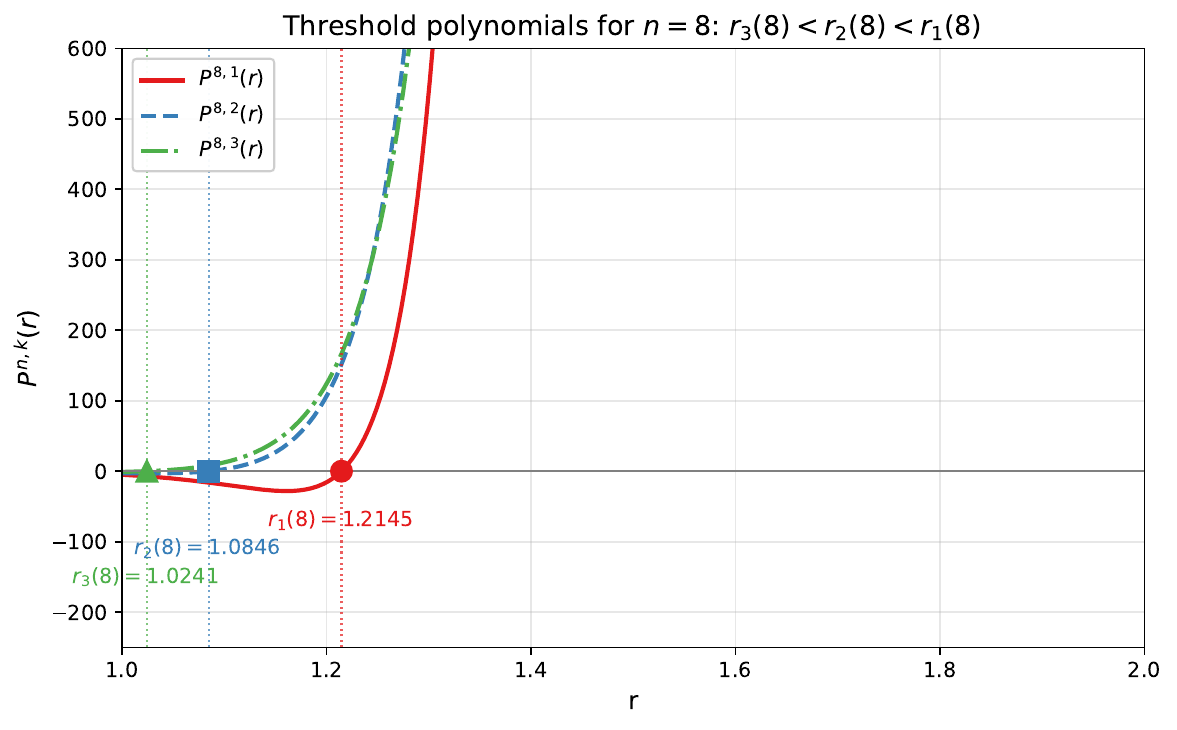}
\caption{Threshold polynomials $P^{8,1}(r)$, $P^{8,2}(r)$, and $P^{8,3}(r)$ on the interval $(1,2)$. Each polynomial has a unique root in $(1,2)$, with $r_3(8) < r_2(8) < r_1(8)$, illustrating the monotonicity established in Theorem~\ref{thm:monotonicity}.}
\label{fig:polynomials}
\end{figure}

\begin{figure}[!ht]
\centering
\includegraphics[width=0.85\textwidth]{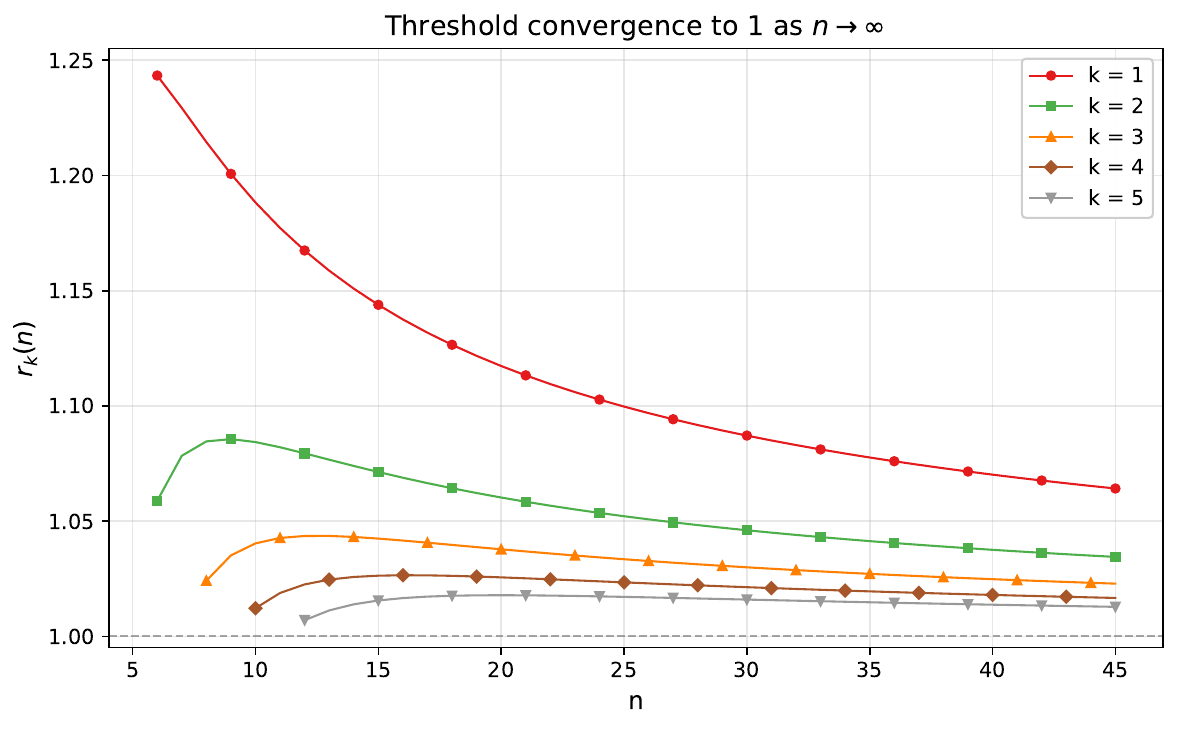}
\caption{Threshold values $r_k(n)$ as a function of $n$ for fixed $k \in \{1,2,3,4,5\}$. All thresholds converge to $1$ as $n \to \infty$. Note that for $k \geq 2$, the sequences are not monotone in $n$.}
\label{fig:rk_vs_n}
\end{figure}

\begin{proposition}
\label{cor:reduction_base}
For $n \geq 6$, $1 \leq k \leq \lfloor n/2 \rfloor - 1$, and $r > 1$,
\[
r \cdot P^{n,k+1}(r) - P^{n,k}(r) > 0.
\]
\end{proposition}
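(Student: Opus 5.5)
The plan is to compare the two polynomials term by term, working directly with the edge description of $P^{n,k}$ from the proof of Lemma~\ref{lem:separation}, rather than with the nested expansion from Theorem~\ref{thm:unique_roots}. Write $e(i,j)=N-\mathrm{idx}(i,j)$ for the exponent of edge $(i,j)$. That proof gives
\[
P^{n,k}(r)=\sum_{i=1}^{k} r^{e(i,k+1)}-\sum_{j=k+2}^{n} r^{e(k+1,j)},
\qquad
P^{n,k+1}(r)=\sum_{i=1}^{k+1} r^{e(i,k+2)}-\sum_{j=k+3}^{n} r^{e(k+2,j)}.
\]
The second formula is legitimate since $k+1\le\lfloor n/2\rfloor\le n-1$, so $C_{k+2}$ is a genuine cut.

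\textbf{Step 1: index shifts from~\eqref{eq:edge_index}.} I will use two identities. Moving one step along a row gives $\mathrm{idx}(i,k+2)=\mathrm{idx}(i,k+1)+1$ for $i\le k$, so $e(i,k+2)=e(i,k+1)-1$. Moving down one row gives $\mathrm{idx}(i+1,j)-\mathrm{idx}(i,j)=n-i-1$. With $i=k+1$ this yields $e(k+2,j)=e(k+1,j)-(n-k-2)$ for $j\ge k+3$.

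\textbf{Step 2: cancellation after multiplying by $r$.} By the first identity, the terms $r\cdot r^{e(i,k+2)}$ for $i=1,\dots,k$ are exactly $r^{e(i,k+1)}$, the positive part of $P^{n,k}$, so they cancel. Writing $e_j:=e(k+1,j)$, what remains is
\[
r P^{n,k+1}(r)-P^{n,k}(r)= r^{e_{k+2}+1}+\sum_{j=k+2}^{n} r^{e_j}-\sum_{j=k+3}^{n} r^{e_j-(n-k-3)}.
\]
Here the first term comes from $i=k+1$, where $(k+1,k+2)$ has exponent $e_{k+2}$. The middle sum is $-1$ times the negative part of $P^{n,k}$. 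The last sum uses the second identity.

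\textbf{Step 3: termwise domination.} Since $k\le\lfloor n/2\rfloor-1$, we have $n-k-3\ge k-1\ge 0$. For $r>1$ this gives $r^{e_j}\ge r^{e_j-(n-k-3)}$, so each term of the subtracted sum ($j\ge k+3$) is dominated by the matching term of the middle sum. The terms left uncancelled are $r^{e_{k+2}+1}+r^{e_{k+2}}$, which is strictly positive, and this proves the claim.

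The main obstacle is purely bookkeeping: getting both index shifts from~\eqref{eq:edge_index} right and checking that the hypothesis on $k$ makes the shift $n-k-3$ nonnegative. I will sanity-check the resulting formula on a small case such as $n=6$, $k=1$, computed directly from Proposition~\ref{prop:base_case}.
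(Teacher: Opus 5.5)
Your proof is correct, and it reaches exactly the same residual polynomial as the paper. The difference is that you get there by a direct edge-level computation, while the paper goes through the recursion.

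The paper uses $P^{n,k}=r^{N-k}+P^{n-1,k-1}$ to write $rP^{n,k+1}-P^{n,k}=rP^{n-1,k}-P^{n-1,k-1}$. It iterates this down to $rP^{m,2}-P^{m,1}$ with $m=n-k+1$. It then expands that single case as $m-1$ positive terms $r^{M-m+1},\dots,r^{M-2m+3}$ against $m-3$ negative terms $r^{M-2m+3},\dots,r^{M-3m+7}$, where $M=\binom{m}{2}$.

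The two arguments line up step for step:
\begin{itemize}
\item The recursion steps, together with the first cancellation in that final computation, remove exactly the pairs $r\cdot r^{e(i,k+2)}$ and $r^{e(i,k+1)}$ for $i=1,\dots,k$. Your Step~2 cancels these all at once.
\item The paper's last step cancels the coincident terms $r^{e_n}$ and $r^{e_{k+3}-(n-k-3)}$. It then notes that the remaining $m-2$ positive exponents all exceed the remaining $m-4$ negative ones. This is just different bookkeeping for your termwise pairing with shift $n-k-3=m-4$.
\end{itemize}

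Your version is more self-contained: it needs only the index formula, not the recursion theorem and the base-case proposition. It also shows at a glance that the hypothesis on $k$ enters only through $n-k-3\ge 0$, so the inequality holds for every $1\le k\le n-3$.

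The paper's version makes explicit that $rP^{n,k+1}-P^{n,k}$ depends only on $m=n-k+1$. This reduces the claim to a one-parameter family and reuses machinery already built for the uniqueness theorem.
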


\begin{proof}
From the recursion $P^{n,k}(r) = r^{N-k} + P^{n-1,k-1}(r)$ (Theorem~\ref{thm:recursion}):
\begin{align*}
r \cdot P^{n,k+1}(r) - P^{n,k}(r) &= r\mleft(r^{N-k-1} + P^{n-1,k}(r)\mright) - \mleft(r^{N-k} + P^{n-1,k-1}(r)\mright) \\
&= r \cdot P^{n-1,k}(r) - P^{n-1,k-1}(r).
\end{align*}
Applying this reduction $k-1$ times yields
\[
r \cdot P^{n,k+1}(r) - P^{n,k}(r) = r \cdot P^{m,2}(r) - P^{m,1}(r),
\]
where $m = n-k+1 \geq 5$. Direct calculation gives
\[
r \cdot P^{m,2}(r) - P^{m,1}(r) = \underbrace{(r^{M-m+1} + \cdots + r^{M-2m+3})}_{m-1 \text{ terms}} - \underbrace{(r^{M-2m+3} + \cdots + r^{M-3m+7})}_{m-3 \text{ terms}},
\]
where $M = \binom{m}{2}$. The smallest positive exponent equals the largest negative exponent; 
after cancellation, $m-2$ positive terms with larger exponents dominate the $m-4$ remaining 
negative terms. Hence the expression is positive for $r > 1$.
\end{proof}

The inequality $r \cdot P^{n,k+1}(r) > P^{n,k}(r)$ implies that when $P^{n,k}$ vanishes, 
$P^{n,k+1}$ must be positive which is the key observation for establishing monotonicity.

\begin{theorem}[Monotonicity and Convergence]
\label{thm:monotonicity}\label{thm:convergence}
For $n \geq 6$:
\begin{enumerate}
\item[(a)] The sequence of thresholds is strictly decreasing: $r_{k+1}(n) < r_k(n)$ for $1 \leq k \leq \lfloor n/2 \rfloor - 2$.
\item[(b)] For all $k \geq 1$, $\displaystyle\lim_{n \to \infty} r_k(n) = 1$.
\end{enumerate}
\end{theorem}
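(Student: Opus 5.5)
For part (a), I will use Proposition~\ref{cor:reduction_base} together with the sign structure of each $P^{n,j}$ from Theorem~\ref{thm:unique_roots}. Fix $n\ge 6$ and $1\le k\le\lfloor n/2\rfloor-2$, so that both $k$ and $k+1$ lie in the range of Theorem~\ref{thm:unique_roots}.

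First I record the sign pattern of each polynomial. Since $P^{n,j}(1)=2j-n+1<0$ and $P^{n,j}$ has a unique positive root $r_j(n)$, it follows that $P^{n,j}(r)<0$ for $1<r<r_j(n)$ and $P^{n,j}(r)>0$ for $r>r_j(n)$. The uniqueness from Descartes' rule covers all of $(0,\infty)$, so there is no other sign change beyond $r_j(n)$.

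Next I evaluate Proposition~\ref{cor:reduction_base} at $r=r_k(n)>1$. This gives
\[
r_k(n)\,P^{n,k+1}(r_k(n)) > P^{n,k}(r_k(n)) = 0,
\]
so $P^{n,k+1}(r_k(n))>0$. By the sign pattern just recorded, this forces $r_k(n)>r_{k+1}(n)$, which is strict as required.

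For part (b), fix $k$ and let $n\to\infty$; then $k\le\lfloor n/2\rfloor-1$ eventually. I will mimic the proof of Theorem~\ref{thm:r1_properties}(c) using the explicit expansion from Step~1 of Theorem~\ref{thm:unique_roots}, with $m=n-k+1$. At the root $r=r_k(n)$, the positive sum of $k$ terms equals the negative sum of $m-2$ terms. The positive side is at most $k\,r^{a}$, where $a=\binom{n}{2}-k$ is the top exponent. The negative side is at least $(m-2)\,r^{b}$, where $b=\binom{m}{2}-2m+3$ is the smallest negative exponent. Hence
\[
(m-2)\,r^{b}\le k\,r^{a},
\]
which only yields a lower bound on $r$, so it is the wrong direction.

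Instead I bound the positive side below by its smallest term $r^{a_k}$ and the negative side above by $(m-2)\,r^{b_1}$. Lemma~\ref{lem:separation} gives $a_k-b_1=n-k$, so
\[
r_k(n)^{\,n-k}<m-2=n-k-1.
\]
This yields $1<r_k(n)<(n-k-1)^{1/(n-k)}\to1$, and the squeeze theorem finishes the proof.

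The main point to check carefully is the sign pattern used in part (a): that $P^{n,k+1}$ is positive exactly to the right of its root on $(1,\infty)$. This needs $P^{n,k+1}(1)<0$, which holds because $k+1\le\lfloor n/2\rfloor-1$. Everything else follows directly from the earlier results.
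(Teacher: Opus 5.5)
Your proof is correct. Part (a) is the paper's argument: evaluate Proposition~\ref{cor:reduction_base} at $r_k(n)$ to get $P^{n,k+1}(r_k(n))>0$, then use $P^{n,k+1}(1)<0$ and uniqueness of the positive root to conclude $r_{k+1}(n)<r_k(n)$.

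Part (b) takes a different route. The paper derives it as a two-line corollary of (a): it uses $1<r_k(n)<r_1(n)$ together with $r_1(n)<(n-2)^{1/(n-1)}\to 1$ from Theorem~\ref{thm:r1_properties}(c). You instead rerun the argument of Theorem~\ref{thm:r1_properties}(c) for general $k$, using Lemma~\ref{lem:separation} to get $r_k(n)^{n-k}<n-k-1$.

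\textbf{What each route buys.} Your version does not depend on part (a) and gives an explicit $k$-dependent bound. The paper's version gives the uniform bound $(n-2)^{1/(n-1)}$ for every admissible $k$. For large $n$ and $k\ge 2$, that uniform bound is slightly sharper than your $(n-k-1)^{1/(n-k)}$.

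\textbf{A free improvement.} Bound the positive block below by $k\,r^{a_k}$ instead of $r^{a_k}$. Your method then gives $r_k(n)^{n-k}<(n-k-1)/k$. This yields $r_k(n)-1=O(1/n)$ when $k$ is proportional to $n$, which the chain through $r_1(n)$ cannot give. It also matches the $\ln\bigl((n-k-1)/k\bigr)$ numerator of the heuristic~\eqref{eq:scaling}.

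Finally, the abandoned first attempt in (b), with $(m-2)\,r^{b}\le k\,r^{a}$, can simply be deleted.
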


\begin{proof}
(a) At $r = r_k(n)$, we have $P^{n,k}(r_k(n)) = 0$ by definition. Therefore:
\[
r_k(n) \cdot P^{n,k+1}(r_k(n)) = r_k(n) \cdot P^{n,k+1}(r_k(n)) - P^{n,k}(r_k(n)) > 0
\]
by Proposition \ref{cor:reduction_base}. Since $r_k(n) > 1$, we conclude $P^{n,k+1}(r_k(n)) > 0$.

By Theorem \ref{thm:unique_roots}, $P^{n,k+1}(r)$ has a unique root $r_{k+1}(n)$ in $(1,2)$. Since $P^{n,k+1}(1) \leq 0$ (from the proof of Theorem \ref{thm:unique_roots}) and $P^{n,k+1}(r_k(n)) > 0$, the root must satisfy $r_{k+1}(n) < r_k(n)$.

(b) By part (a), $1 < r_k(n) < r_1(n)$ for all $k \geq 2$. Since $r_1(n) \to 1$ by Theorem \ref{thm:r1_properties}(c), the squeeze theorem gives $r_k(n) \to 1$.
\end{proof}

\begin{figure}[!ht]
\centering
\includegraphics[width=0.8\textwidth]{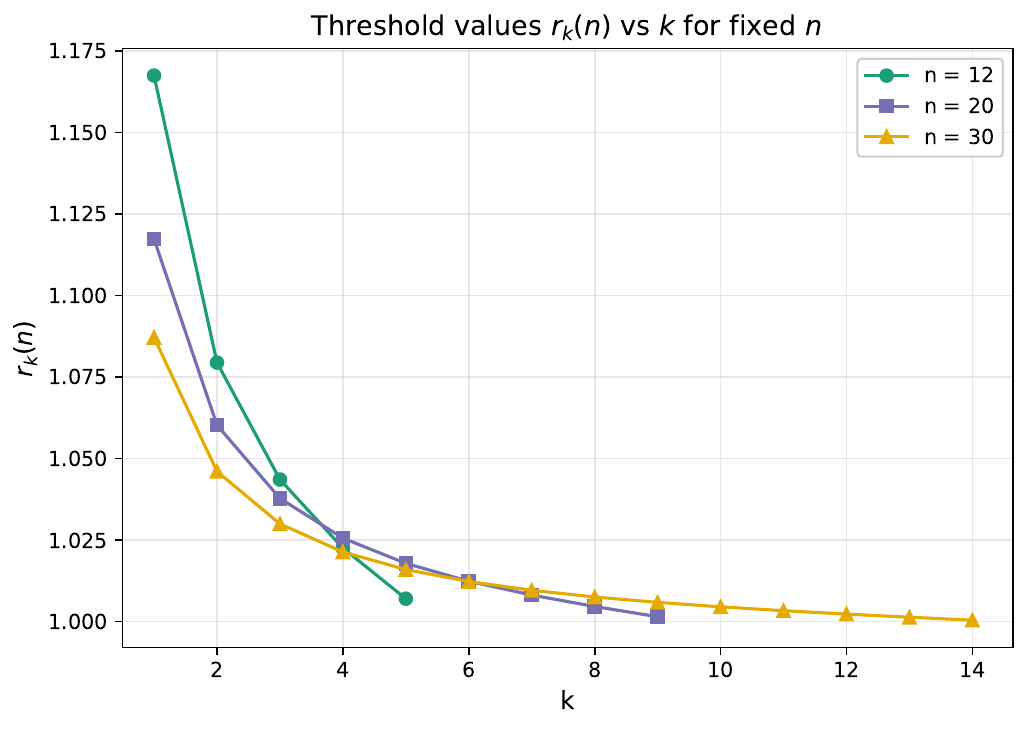}
\caption{Threshold values $r_k(n)$ as a function of $k$ for fixed $n \in \{12, 20, 30\}$. For each $n$, the sequence $r_1(n) > r_2(n) > \cdots > r_{\lfloor n/2 \rfloor - 1}(n)$ is strictly decreasing, as established in Theorem~\ref{thm:monotonicity}.}
\label{fig:rk_vs_k}
\end{figure}

Together, Theorems \ref{thm:unique_roots} and \ref{thm:monotonicity} show that the thresholds partition $(1,2)$ into intervals where successive isolated cuts are pairwise optimal. The next result promotes this to optimality over \emph{all} isolated cuts simultaneously.

To state the result cleanly, we adopt the convention $r_0(n) = 2$ (so that the interval $(r_1(n), r_0(n))$ corresponds to the regime where $C_1$ is optimal).

\begin{theorem}[Main result: Optimality among isolated cuts]
\label{thm:isolated_optimality}
For $n \geq 6$, $1 \leq k \leq \lfloor n/2 \rfloor$, and $r \in (r_k(n), r_{k-1}(n))$, the $k$-isolated cut achieves the maximum weight among all $j$-isolated cuts, $j = 1, \ldots, \lfloor n/2 \rfloor$.
\end{theorem}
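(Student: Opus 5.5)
The plan is to telescope the pairwise comparisons already controlled by the threshold polynomials. For $1 \le j < l \le \lfloor n/2\rfloor$, the definition of $P^{n,i}$ gives
\[
W^n(C_j;r) - W^n(C_l;r) = \sum_{i=j}^{l-1} \mleft( W^n(C_i;r) - W^n(C_{i+1};r) \mright) = \sum_{i=j}^{l-1} P^{n,i}(r).
\]
Every index appearing here satisfies $i \le \lfloor n/2\rfloor - 1$, which is exactly the range covered by Theorem~\ref{thm:unique_roots}. So the whole problem reduces to knowing the sign of each $P^{n,i}(r)$ for $r \in (1,2)$.

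\textbf{Step 1 (sign pattern).} Fix $1 \le i \le \lfloor n/2\rfloor - 1$. From the proof of Theorem~\ref{thm:unique_roots} we have $P^{n,i}(1) \le -1 < 0$ and $P^{n,i}(2) > 0$, and $r_i(n)$ is the only root of $P^{n,i}$ in $(0,\infty)$, by Descartes' rule. By continuity,
\begin{itemize}
\item $P^{n,i}(r) < 0$ for $r \in (1, r_i(n))$;
\item $P^{n,i}(r) > 0$ for $r \in (r_i(n), 2)$.
\end{itemize}
Theorem~\ref{thm:monotonicity}(a) gives $r_1(n) > r_2(n) > \cdots > r_{\lfloor n/2\rfloor-1}(n) > 1$. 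We adopt the conventions $r_0(n) = 2$ (as in the statement) and $r_{\lfloor n/2\rfloor}(n) = 1$; the latter is needed so that the case $k = \lfloor n/2\rfloor$ reads $r \in (1, r_{k-1}(n))$.

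\textbf{Step 2 (comparison with $j > k$).} Let $r \in (r_k(n), r_{k-1}(n))$ and $k < j \le \lfloor n/2\rfloor$. Then
\[
W^n(C_k;r) - W^n(C_j;r) = \sum_{i=k}^{j-1} P^{n,i}(r).
\]
Each index satisfies $k \le i \le \lfloor n/2\rfloor - 1$, so $r_i(n) \le r_k(n) < r < 2$. By Step 1 every summand is positive, hence $W^n(C_k;r) > W^n(C_j;r)$. This case is vacuous when $k = \lfloor n/2\rfloor$.

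\textbf{Step 3 (comparison with $j < k$).} For $1 \le j < k$,
\[
W^n(C_j;r) - W^n(C_k;r) = \sum_{i=j}^{k-1} P^{n,i}(r).
\]
Each index satisfies $i \le k-1$, so $r_i(n) \ge r_{k-1}(n) > r > 1$. By Step 1 every summand is negative, hence $W^n(C_k;r) > W^n(C_j;r)$. This case is vacuous when $k = 1$, where the convention $r_0(n) = 2$ only restricts $r$ to lie below $2$.

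The main obstacle is not the telescoping but the bookkeeping at the ends of the range. First, Theorem~\ref{thm:unique_roots} only supplies roots for $i \le \lfloor n/2\rfloor - 1$, so one must confirm that the telescoped sums never involve $P^{n,\lfloor n/2\rfloor}$; they do not, because $j, k \le \lfloor n/2\rfloor$. Second, the top interval $k = \lfloor n/2\rfloor$ requires the convention $r_{\lfloor n/2\rfloor}(n) = 1$. Third, the sign determination in Step 1 must use that $r_i(n)$ is the \emph{only} positive root, not merely one root in $(1,2)$. With these checked, Steps 2 and 3 show that $C_k$ strictly dominates every other isolated cut on the open interval, which proves Theorem~\ref{thm:isolated_optimality}.
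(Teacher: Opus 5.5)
Your proof is correct and follows the paper's argument. The paper chains the strict pairwise inequalities $W^n(C_i;r) \lessgtr W^n(C_{i+1};r)$, read off from the sign of $P^{n,i}(r)$ relative to $r_i(n)$ and the ordering of the thresholds; your telescoping sum $\sum_{i} P^{n,i}(r)$ is the same argument written additively. Your explicit convention $r_{\lfloor n/2\rfloor}(n)=1$ for the top interval is a useful clarification that the paper leaves implicit.
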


\begin{proof}
Recall that $P^{n,j}(r) = W^n(C_j;r) - W^n(C_{j+1};r)$. By Theorem \ref{thm:unique_roots}, $P^{n,j}(r)$ has a unique root $r_j(n)$ in $(1,2)$, with $P^{n,j}(1) \leq 0$ and $P^{n,j}(2) > 0$. Therefore:
\begin{itemize}
    \item For $r < r_j(n)$: $P^{n,j}(r) < 0$, hence $W^n(C_j;r) < W^n(C_{j+1};r)$
    \item For $r > r_j(n)$: $P^{n,j}(r) > 0$, hence $W^n(C_j;r) > W^n(C_{j+1};r)$
\end{itemize}

Let $r \in (r_k(n), r_{k-1}(n))$. We show $W^n(k\text{-isolated};r) > W^n(j\text{-isolated};r)$ for all $j \neq k$.

\textbf{Case 1: $j < k$.} By Theorem \ref{thm:monotonicity}, $r_j(n) > r_{j+1}(n) > \cdots > r_{k-1}(n) > r$. Thus $r < r_j(n)$ for all $j < k$, which implies:
\[
W^n(C_j;r) < W^n(C_{j+1};r) < \cdots < W^n(C_{k-1};r) < W^n(C_k;r)
\]

\textbf{Case 2: $j > k$.} By Theorem \ref{thm:monotonicity}, $r > r_k(n) > r_{k+1}(n) > \cdots > r_{j-1}(n)$. Thus $r > r_i(n)$ for all $i \geq k$, which implies:
\[
W^n(C_k;r) > W^n(C_{k+1};r) > \cdots > W^n(C_{j-1};r) > W^n(C_j;r)
\]

Combining both cases, $W^n(C_k;r) > W^n(C_j;r)$ for all $j \neq k$.
\end{proof}

\begin{remark}
At the boundary $r = r_k(n)$, cuts $C_k$ and $C_{k+1}$ have equal weight. 
Note that $C_1$ remains optimal for all $r \geq r_1(n)$, including the trivial 
regime $r \geq 2$ where edge $(1,2)$ alone dominates.
\end{remark}

This theorem provides a complete solution to the optimization problem restricted to 
isolated cuts: for each $r \in (1,2)$, the optimal isolated cut is explicitly identified 
by locating $r$ within the threshold intervals.

\begin{figure}[!ht]
\centering
\subcaptionbox
  {Phase diagram for $n = 8$. Vertical dashed lines indicate thresholds $r_3(8) \approx 1.0241$, $r_2(8) \approx 1.0846$, and $r_1(8) \approx 1.2145$.\label{fig:sub1}}
  [.48\linewidth]
  {\includegraphics[width=.9\linewidth]{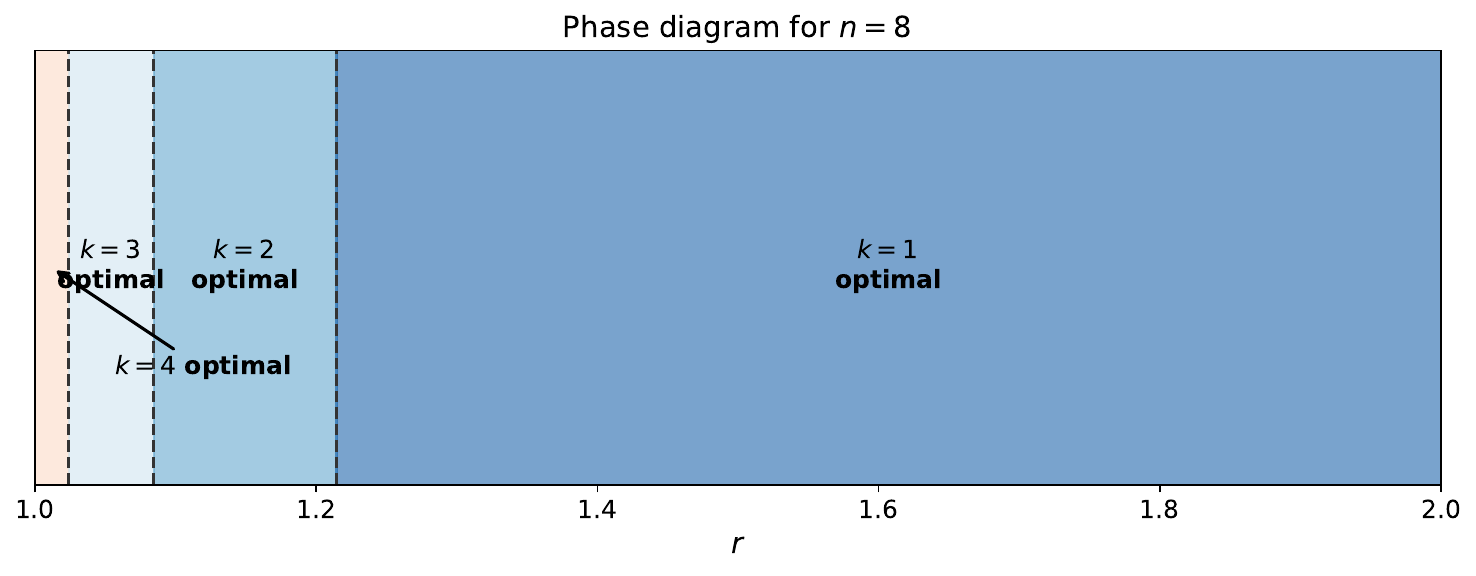}}%
\hfill
\subcaptionbox
  {Phase diagram for $n = 20$\label{fig:sub2}}
  [.48\linewidth]
  {\includegraphics[width=.9\linewidth]{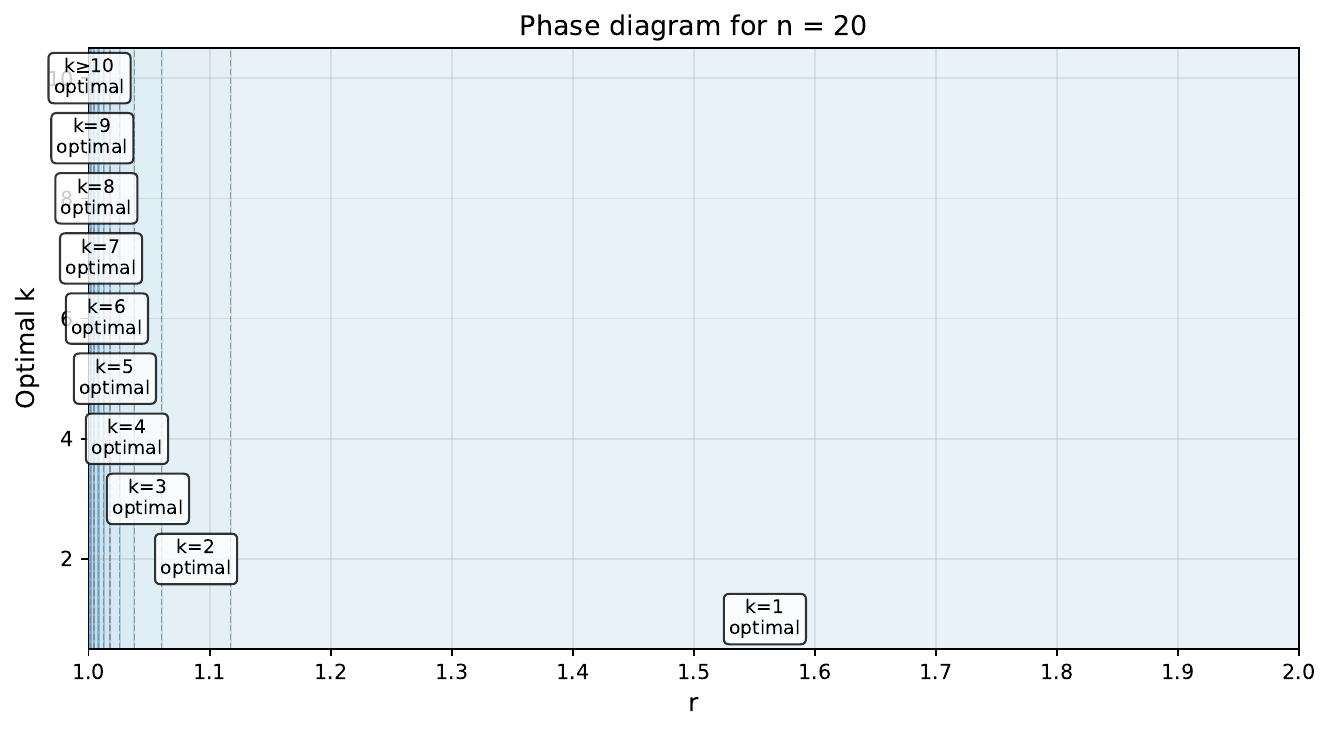}}
\caption{In each region $(r_k(n), r_{k-1}(n))$, the $k$-isolated cut achieves maximum weight among all isolated cuts.}
\label{fig:test}
\end{figure}

\subsection{Scaling Law}\label{sec:scaling}

The results above are rigorous; this subsection is heuristic and serves only to illuminate the asymptotic behavior of the thresholds.

The thresholds admit the following approximation.

By balancing the positive and 
negative terms in $P^{n,k}(r)$ through comparison of term counts and average 
exponents, one obtains

\begin{equation}\label{eq:scaling}
r_k(n) - 1 \approx \frac{\ln\mleft(\frac{n-k-1}{k}\mright)}{\Delta(n,k)} \qquad \text{as } n \to \infty,
\end{equation}
where $\Delta(n,k) = \frac{(k+2)(n-k)}{2} - \frac{(k+1)(4-k)}{6}$. 
For fixed $k$, this predicts decay of order $\frac{\ln n}{n}$, 
slower than any power law. The derivation and numerical comparisons 
appear in Appendix~\ref{app:scaling}.


\vskip5mm
The results above establish that isolated cuts dominate among themselves. The natural next question is whether they also dominate among \emph{all} cuts.

\section{Global optimality: conjecture and computational evidence}\label{sec:global}

We conjecture that the phase diagram established in Theorem~\ref{thm:isolated_optimality} 
extends to all cuts: the $k$-isolated cut that wins among isolated cuts also wins globally.

\begin{conjecture}[Global Optimality]\label{conj:global}
For $n \geq 7$ and $r \in (r_k(n), r_{k-1}(n))$, the $k$-isolated cut $C_k$ achieves the maximum weight among all $2^{n-1}$ cuts.
\end{conjecture}

The conjecture has been verified by exhaustive enumeration of all $2^{n-1}$ cuts for $n \leq 21$ and by near-isolated verification for $n \leq 100$. 
For exhaustive enumeration, we used a uniform grid over $(1.001, 1.999)$ with mesh size $0.001$ (1998 evaluation points); for each $(n,r)$ pair, we confirmed that the maximum weight is achieved by an isolated cut when $n \geq 7$.

The bound $n \geq 7$ is sharp. For $n \in \{4, 5, 6\}$, exhaustive search reveals intervals where 
non-isolated cuts are globally optimal. In each case, these are \emph{near-isolated cuts} of the 
form $S_k^* = \{1, \ldots, k, n\}$---partitions that include vertex $n$ instead of a contiguous block.
Table~\ref{tab:small_n} summarizes the results. 


\begin{table}[!ht]
\centering
\caption{Globally optimal cuts for $n = 4, 5, 6$ by exhaustive enumeration. 
At interval boundaries, the adjacent cuts have equal weight.}
\label{tab:small_n}
\medskip
\begin{tabular}{@{}clll@{}}
\toprule
$n$ & Interval & Optimal cut & Type \\
\midrule
4 & $(1, 1.324)$ & $S_1^* = \{1, 4\}$ & near-isolated \\
  & $(1.324, 2)$ & $C_1 = \{1\}$ & isolated \\
\addlinespace
5 & $(1, 1.22)$ & $C_2 = \{1, 2\}$ & isolated \\
  & $(1.22, 1.28)$ & $S_1^* = \{1, 5\}$ & near-isolated \\
  & $(1.28, 2)$ & $C_1 = \{1\}$ & isolated \\
\addlinespace
6 & $(1, 1.04)$ & $C_3 = \{1, 2, 3\}$ & isolated \\
  & $(1.04, 1.06)$ & $S_2^* = \{1, 2, 6\}$ & near-isolated \\
  & $(1.06, 1.24)$ & $C_2 = \{1, 2\}$ & isolated \\
  & $(1.24, 2)$ & $C_1 = \{1\}$ & isolated \\
\bottomrule
\end{tabular}
\end{table}

Figure~\ref{fig:small_n} visualizes these results. For $n = 4$, the isolated cut $C_2$ is 
\emph{never} globally optimal---it is dominated by $S_1^*$ throughout its theoretical region 
of optimality. For $n = 5$ and $n = 6$, near-isolated cuts intrude in intervals surrounding the 
thresholds $r_k(n)$ where consecutive isolated cuts exchange optimality. The intrusion intervals 
shrink as $n$ increases, and disappear entirely for $n \geq 7$.

\begin{figure}[!ht]
\centering
\begin{tikzpicture}[xscale=7, yscale=1.1]

\definecolor{isol}{RGB}{70, 130, 180}
\definecolor{near}{RGB}{205, 92, 92}
\def\h{0.18}

\def\y{3.3}
\fill[near!40] (1,\y-\h) rectangle (1.324,\y+\h);
\fill[isol!40] (1.324,\y-\h) rectangle (2,\y+\h);
\draw[thick] (1,\y) -- (2,\y);
\node[left, font=\small] at (0.98,\y) {$n=4$};
\node[above, font=\scriptsize] at (1.16,\y+\h) {$S_1^*$};
\node[above, font=\scriptsize] at (1.66,\y+\h) {$C_1$};
\draw (1.324,\y-\h) -- (1.324,\y+\h);
\node[below, font=\tiny] at (1.324,\y-\h-0.02) {$1.32$};

\def\y{2.2}
\fill[isol!40] (1,\y-\h) rectangle (1.22,\y+\h);
\fill[near!40] (1.22,\y-\h) rectangle (1.278,\y+\h);
\fill[isol!40] (1.278,\y-\h) rectangle (2,\y+\h);
\draw[thick] (1,\y) -- (2,\y);
\node[left, font=\small] at (0.98,\y) {$n=5$};
\node[above, font=\scriptsize] at (1.11,\y+\h) {$C_2$};
\node[above, font=\scriptsize] at (1.249,\y+\h) {$S_1^*$};
\node[above, font=\scriptsize] at (1.64,\y+\h) {$C_1$};
\draw (1.22,\y-\h) -- (1.22,\y+\h);
\draw (1.278,\y-\h) -- (1.278,\y+\h);
\node[below, font=\tiny] at (1.2,\y-\h-0.02) {$1.22$};
\node[below, font=\tiny] at (1.3,\y-\h-0.02) {$1.28$};

\def\y{1.1}
\fill[isol!40] (1,\y-\h) rectangle (1.041,\y+\h);
\fill[near!40] (1.041,\y-\h) rectangle (1.062,\y+\h);
\fill[isol!40] (1.062,\y-\h) rectangle (2,\y+\h);
\draw[thick] (1,\y) -- (2,\y);
\node[left, font=\small] at (0.98,\y) {$n=6$};
\node[above, font=\scriptsize] at (1,\y+\h) {$C_3$};
\node[above, font=\scriptsize] at (1.06,\y+\h+0.06) {$S_2^*$};
\node[above, font=\scriptsize] at (1.15,\y+\h) {$C_2$};
\node[above, font=\scriptsize] at (1.62,\y+\h) {$C_1$};
\draw (1.041,\y-\h) -- (1.041,\y+\h);
\draw (1.062,\y-\h) -- (1.062,\y+\h);
\draw (1.243,\y-\h) -- (1.243,\y+\h);
\node[below, font=\tiny] at (1.021,\y-\h-0.02) {$1.04$};
\node[below, font=\tiny] at (1.09,\y-\h-0.02) {$1.06$};
\node[below, font=\tiny] at (1.243,\y-\h-0.02) {$1.24$};

\def\y{0}
\fill[isol!40] (1,\y-\h) rectangle (2,\y+\h);
\draw[thick] (1,\y) -- (2,\y);
\node[left, font=\small] at (0.98,\y) {$n \geq 7$};
\node[above, font=\scriptsize] at (1.06,\y+\h) {$C_{\lfloor n/2\rfloor}$};
\node[above, font=\scriptsize] at (1.22,\y+\h) {$\cdots$};
\node[above, font=\scriptsize] at (1.45,\y+\h) {$C_2$};
\node[above, font=\scriptsize] at (1.75,\y+\h) {$C_1$};

\node[below, font=\scriptsize] at (1,-0.32) {$1$};
\node[below, font=\scriptsize] at (2,-0.32) {$2$};
\node[below, font=\scriptsize] at (1.5,-0.32) {$r$};
\draw[->] (1,-0.34) -- (2.05,-0.34);

\fill[isol!40] (1.35,-0.65) rectangle (1.45,-0.85);
\node[right, font=\scriptsize] at (1.45,-0.8) {Isolated $C_k$};
\fill[near!40] (1.72,-0.65) rectangle (1.82,-0.85);
\node[right, font=\scriptsize] at (1.82,-0.8) {Near-isolated $S_k^*$};

\end{tikzpicture}
\caption{Globally optimal cuts as a function of $r$, determined by exhaustive enumeration. For $n \leq 6$, near-isolated cuts $S_k^* = \{1,\ldots,k,n\}$ are optimal in certain intervals (red regions). For $n \geq 7$, isolated cuts are globally optimal throughout $(1,2)$ (Conjecture~\ref{conj:global}).}
\label{fig:small_n}
\end{figure}
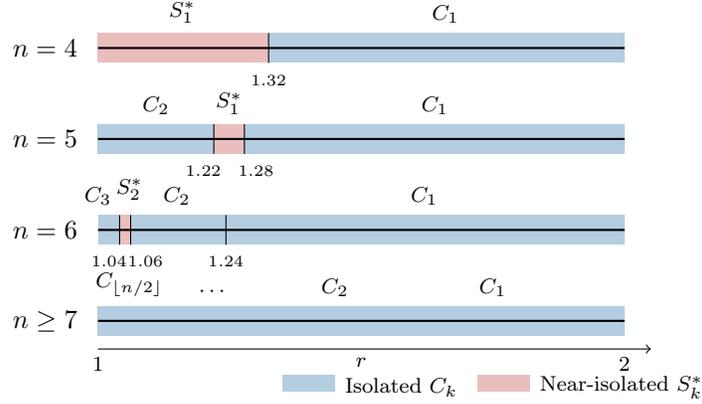


For $n = 4$, the comparison between $C_2 = \{1,2\}$ and $S_1^* = \{1,4\}$ yields a simple identity. 
Direct calculation gives
\[
W^4(S_1^*; r) - W^4(C_2; r) = r^5 - r^3 - r^2 + 1 = (r-1)^2(r+1)(r^2+r+1),
\]
which is strictly positive for all $r > 1$. Thus $C_2$ is \emph{never} optimal---it loses to 
$S_1^*$ throughout $(1,2)$. For $n \geq 5$, the analogous difference $W^n(S_1^*;r) - W^n(C_2;r)$ no 
longer has $(r-1)^2$ as a factor, so $C_2$ can beat $S_1^*$ in part of the interval. The region 
where isolated cuts win expands with $n$, and by $n = 7$ it covers all of $(1,2)$.


\subsection{Computational Verification}\label{sec:computational}

We employed two verification strategies to support Conjecture~\ref{conj:global}:
exhaustive enumeration for $n \leq 21$ and targeted near-isolated verification 
for $n \leq 100$. No violations were found for any $n \geq 7$. Full threshold 
tables, methodology details, and numerical considerations are provided in 
Appendix~\ref{app:computational}. 

\section{Conclusion}\label{sec:conclusion}

We have studied the maximum cut problem on $K_n$ with geometric edge weights $w_i = r^{N-i}$. 
For $r \in (1,2)$, we proved that isolated cuts $C_k$ are optimal among themselves, with 
transitions at thresholds $r_1(n) > r_2(n) > \cdots > 1$. We conjecture this extends to 
global optimality for $n \geq 7$, supported by computation up to $n = 100$. The bound is 
sharp: for $n \leq 6$, near-isolated cuts $S_k^* = \{1,\ldots,k,n\}$ can win.

Several questions remain. Can one prove global optimality, perhaps by first showing that 
near-isolated cuts are the only competitors? Can the scaling law~\eqref{eq:scaling} be 
proved with explicit error bounds? Does similar threshold structure appear for other 
weight families?

To place our results in context, we compare the optimal cuts against generic lower bounds for 
weighted Max-Cut. Let $\mu(G)$ denote the maximum cut value, $w(G)$ the total edge weight, and $M$ a maximum-weight matching (a set of pairwise non-adjacent edges of maximum total weight). The strongest known bounds include: Poljak--Turz\'{\i}k~\cite{poljak1986polynomial}, 
$\mu(G) \geq \frac{w(G)}{2} + \frac{w(T_{\min})}{4}$ where $T_{\min}$ is a minimum spanning tree; and Gutin--Yeo~\cite{gutin2023lower}, 
$\mu(G) \geq \frac{w(G) + w(M)}{2}$. 
For our geometric weights, the Gutin--Yeo bound is tightest; Table~\ref{tab:bounds} compares it with the optimal isolated cuts for several parameter values.

\begin{table}[!ht]
\centering
\caption{Gutin--Yeo bound vs.\ optimal $k$-isolated cut.}
\label{tab:bounds}
\begin{tabular}{cccccc}
\toprule
$n$ & $r$ & $k$ & Optimum & GY bound & Gap \\
\midrule
8 & 1.05 & 3 & 35.88 & 33.20 & 7.5\% \\
8 & 1.10 & 2 & 89.33 & 76.86 & 14.0\% \\
8 & 1.50 & 1 & $1.60 \times 10^5$ & $1.14 \times 10^5$ & 29.1\% \\
10 & 1.20 & 1 & $1.47 \times 10^4$ & $1.07 \times 10^4$ & 27.1\% \\
\bottomrule
\end{tabular}
\end{table}

Even the strongest known generic bound lies 7--31\% below the true optimum. This gap illustrates 
why bounds valid for all weighted graphs cannot capture the fine structure present in specific 
weight families---and why exact characterizations, as developed in this paper, are valuable. 
More broadly, this work demonstrates that structured weight functions can induce rich 
combinatorial behavior even on complete graphs. 


\appendix

\section{Scaling Law Derivation}\label{app:scaling}

We derive the heuristic approximation~\eqref{eq:scaling} for threshold values.
Recall that $P^{n,k}(r) = W^n(C_k; r) - W^n(C_{k+1}; r)$ can be written as
\[
P^{n,k}(r) = \sum_{i=1}^{k} r^{a_i} - \sum_{j=1}^{n-k-1} r^{b_j},
\]
where the positive terms consist of $k$ monomials with exponents $a_1 > \cdots > a_k$, 
the negative terms consist of $n-k-1$ monomials with exponents $b_1 > \cdots > b_{n-k-1}$, 
and all positive exponents exceed all negative exponents ($a_k > b_1$). 

\begin{example}
For $n = 7$ and $k = 2$: $P^{7,2}(r) = r^{19} + r^{14} - (r^9 + r^8 + r^7 + r^6)$.
\end{example}

The explicit formulas for the exponents follow from~\eqref{eq:edge_index}:
\begin{align*}
a_i &= \tbinom{n-i+1}{2} - (k-i+1), \quad i = 1, \ldots, k, \\
\{b_j\} &= \mleft\{\tbinom{m}{2} - 2m + 3, \, \tbinom{m}{2} - 2m + 4, \, \ldots, \, \tbinom{m}{2} - m\mright\},
\end{align*}
where $m = n - k + 1$. The negative exponents form an arithmetic sequence of length $n - k - 1$.

\subsection*{Averaging heuristic}

At the threshold $r = r_k(n)$, we have $\sum_{i=1}^{k} r^{a_i} = \sum_{j=1}^{n-k-1} r^{b_j}$.
The left side has few terms with high exponents, while the right side has many terms with 
low exponents. Approximating each sum by the number of terms times $r$ raised to the 
average exponent:
\[
k \cdot r^{\bar{a}} \approx (n-k-1) \cdot r^{\bar{b}},
\]
where $\bar{a} = \frac{1}{k}\sum_{i=1}^{k} a_i$ and $\bar{b} = \frac{1}{n-k-1}\sum_{j=1}^{n-k-1} b_j$. 
Writing $\Delta = \bar{a} - \bar{b}$ for the gap between average exponents, we obtain 
$k \cdot r^{\Delta} \approx n - k - 1$. Taking logarithms and using $\ln(r) \approx r - 1$ 
for $r$ close to 1, we obtain
\begin{equation}\label{eq:scaling_app}
r_k(n) - 1 \approx \frac{\ln\mleft(\frac{n-k-1}{k}\mright)}{\Delta(n,k)}.
\end{equation}

Since the negative exponents form an arithmetic sequence from $\binom{m}{2} - 2m + 3$ 
to $\binom{m}{2} - m$ (where $m = n - k + 1$), their average is 
$\bar{b} = \frac{(n-k)(n-k-2)}{2}$. The average of positive exponents is 
$\bar{a} = \binom{n}{2} - \frac{(k-1)n}{2} - \frac{(k+1)(4-k)}{6k}$. Combining:

\begin{proposition}\label{prop:delta}
The gap between average exponents is given by
\[
\Delta(n,k) = \frac{(k+2)(n-k)}{2} - \frac{(k+1)(4-k)}{6}.
\]
\end{proposition}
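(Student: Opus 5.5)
The plan is to compute $\bar a$ and $\bar b$ separately from the explicit exponent formulas and subtract. The paper has already stated both averages, but I will recheck them.

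\textbf{Negative exponents.} The $b_j$ form the arithmetic progression from $\binom{m}{2}-2m+3$ to $\binom{m}{2}-m$, with $m=n-k+1$. Its average is the midpoint
\[
\bar b=\binom{m}{2}-\tfrac{3m-3}{2}=\tfrac{(m-1)(m-3)}{2}.
\]
Substituting $m-1=n-k$ gives $\bar b=\tfrac{(n-k)(n-k-2)}{2}$, as claimed.

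\textbf{Positive exponents.} The $a_i$ are $\binom{n-i+1}{2}-(k-i+1)$ for $i=1,\dots,k$. Substitute $t=i-1$, so that $t=0,\dots,k-1$ and
\[
a=\binom{n-t}{2}-(k-t).
\]
Expanding,
\[
\binom{n-t}{2}=\binom{n}{2}-tn+\tfrac{t(t+1)}{2}.
\]
Now sum over $t$, using $\sum t=\tfrac{k(k-1)}{2}$ and $\sum t(t+1)=\tfrac{(k-1)k(k+1)}{3}$. The correction terms contribute $\sum_t (k-t)=\tfrac{k(k+1)}{2}$. Dividing by $k$ gives
\[
\bar a=\binom{n}{2}-\tfrac{(k-1)n}{2}+\tfrac{(k-1)(k+1)}{6}-\tfrac{k+1}{2}.
\]
The last two terms combine to $\tfrac{(k+1)(k-4)}{6}$, so
\[
\bar a=\binom{n}{2}-\tfrac{(k-1)n}{2}-\tfrac{(k+1)(4-k)}{6}.
\]
This does not match the paper's displayed $\bar a$, which divides the final term by $6k$. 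The target $\Delta$ carries the denominator $6$ with no $k$, so the paper's $\bar a$ formula must contain a typo. I will use my version.

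\textbf{Subtracting.} It remains to show
\[
\binom{n}{2}-\tfrac{(k-1)n}{2}-\tfrac{(n-k)(n-k-2)}{2}=\tfrac{(k+2)(n-k)}{2}.
\]
Multiply through by $2$ and expand the left side:
\[
n^2-n-kn+n-\bigl(n^2-2kn+k^2-2n+2k\bigr)=kn-k^2+2n-2k.
\]
The right side is $(k+2)(n-k)=kn-k^2+2n-2k$, so the two agree. Hence
\[
\Delta=\tfrac{(k+2)(n-k)}{2}-\tfrac{(k+1)(4-k)}{6}.
\]

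\textbf{Sanity check.} For $n=7$, $k=2$, the example polynomial gives $\bar a=16.5$ and $\bar b=7.5$, so $\Delta=9$. The formula gives $10-1=9$, which agrees.

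The only real obstacle is the bookkeeping in the sum of the $a_i$, specifically handling the $\binom{n-t}{2}$ expansion correctly. That step is also where the discrepancy with the paper's stated $\bar a$ arises.
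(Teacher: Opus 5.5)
Your proposal is correct and takes the same route as the paper: compute $\bar b$ as the midpoint of the arithmetic progression, compute $\bar a$ by summing $\binom{n-t}{2}-(k-t)$ over $t=0,\dots,k-1$, and subtract. You are also right that the paper's displayed $\bar a$ has a typo: its last term should be $\frac{(k+1)(4-k)}{6}$, not $\frac{(k+1)(4-k)}{6k}$. The paper's own example $n=7$, $k=2$ confirms this, since $\bar a=16.5$ there while the $6k$ version gives $17$.
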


\begin{example}
For $n = 7$, $k = 2$: $\Delta(7,2) = \frac{4 \cdot 5}{2} - \frac{3 \cdot 2}{6} = 10 - 1 = 9$.
Direct calculation gives $\bar{a} = \frac{19 + 14}{2} = 16.5$ and $\bar{b} = \frac{9+8+7+6}{4} = 7.5$, 
so $\Delta = 9$ as predicted.
\end{example}

For large $n$ with fixed $k$, the dominant term is $\Delta(n,k) \approx \frac{(k+2)n}{2}$, giving
\[
r_k(n) - 1 \approx \frac{2\ln(n/k)}{(k+2)n} \qquad \text{as } n \to \infty.
\]

\subsection*{Comparison with computed values}

Table~\ref{tab:scaling} compares actual threshold values with predictions from~\eqref{eq:scaling_app}. 
The formula achieves 1--8\% accuracy for $k \geq 2$, with a systematic $\sim$10\% underestimate 
for $k = 1$. This error reflects the fact that a single positive exponent cannot be meaningfully \enquote{averaged.}

\begin{table}[!ht]
\centering
\begin{tabular}{cc|ccc}
\toprule
$n$ & $k$ & $r_k(n) - 1$ (actual) & Prediction & Error \\
\midrule
10 & 1 & 0.1883 & 0.1664 & $-11.6\%$ \\
10 & 2 & 0.0843 & 0.0835 & $-1.0\%$ \\
10 & 3 & 0.0404 & 0.0412 & $+2.0\%$ \\
\midrule
15 & 1 & 0.1439 & 0.1282 & $-10.9\%$ \\
15 & 2 & 0.0713 & 0.0717 & $+0.5\%$ \\
15 & 3 & 0.0424 & 0.0443 & $+4.4\%$ \\
\midrule
20 & 1 & 0.1173 & 0.1051 & $-10.4\%$ \\
20 & 2 & 0.0603 & 0.0611 & $+1.5\%$ \\
20 & 3 & 0.0378 & 0.0400 & $+5.9\%$ \\
\midrule
30 & 2 & 0.0461 & 0.0473 & $+2.7\%$ \\
30 & 3 & 0.0300 & 0.0323 & $+7.8\%$ \\
30 & 4 & 0.0214 & 0.0235 & $+10.0\%$ \\
\bottomrule
\end{tabular}
\caption{Comparison of actual threshold values with the scaling law prediction~\eqref{eq:scaling_app}.}
\label{tab:scaling}
\end{table}

\begin{figure}[!ht]
\centering
\includegraphics[width=0.75\textwidth]{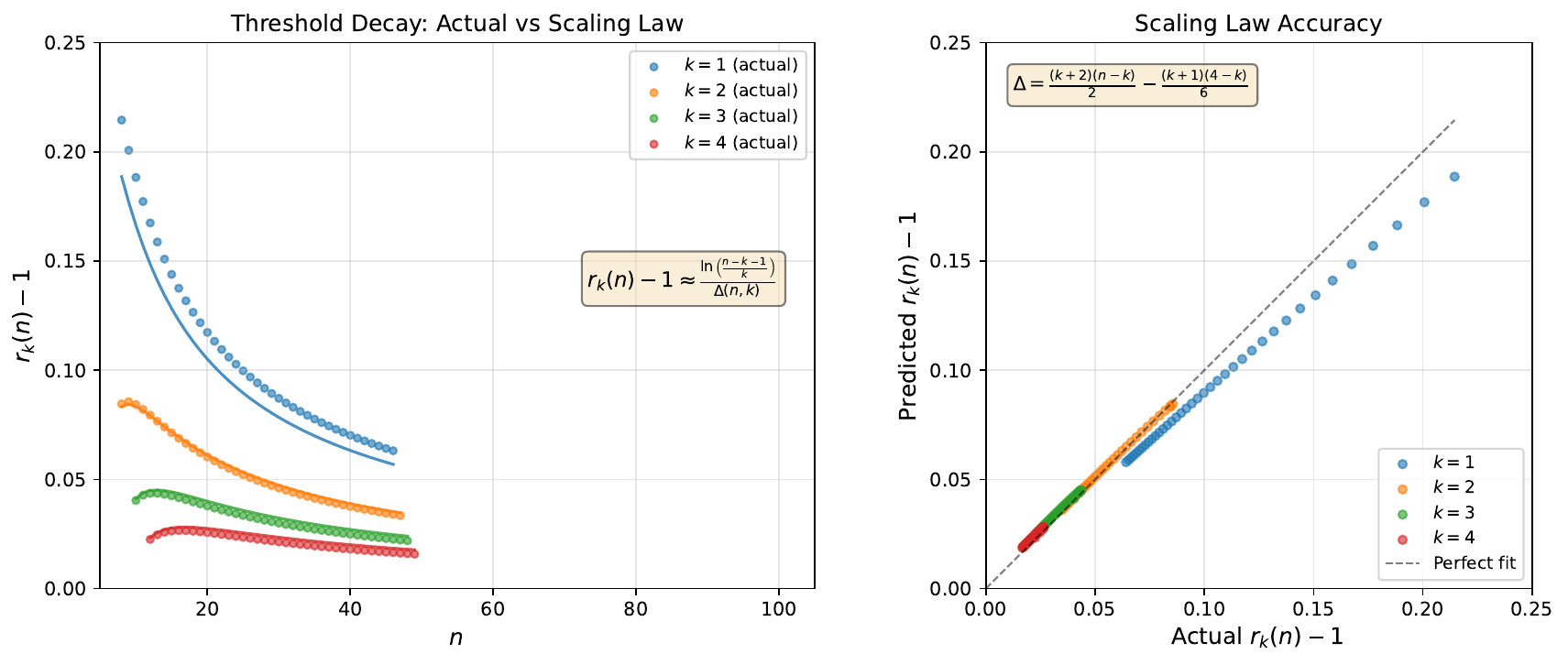}
\caption{Verification of the scaling law. \emph{Left:} The quantity $r_k(n) - 1$ 
as a function of $n$ for $k = 1, 2, 3, 4$. Points show computed values; curves 
show the prediction~\eqref{eq:scaling_app}. \emph{Right:} Parity plot comparing 
predicted vs.\ actual values.}
\label{fig:scaling}
\end{figure}


\section{Computational Details}\label{app:computational}

\subsection*{Threshold tables}

Table~\ref{tab:thresholds} provides computed threshold values $r_k(n)$ for $n = 6$ to $20$ 
and all valid $k$. Values are computed to six decimal places using the \texttt{uniroot} 
function in R with tolerance $10^{-12}$.

\begin{table}[!ht]
\centering
\caption{Threshold values $r_k(n)$ for $6 \leq n \leq 20$. Entry \enquote{--} indicates $k > \lfloor n/2 \rfloor - 1$.}
\label{tab:thresholds}
\small
\begin{tabular}{c|cccccccc}
\hline
$n$ & $k=1$ & $k=2$ & $k=3$ & $k=4$ & $k=5$ & $k=6$ & $k=7$ & $k=8$ \\
\hline
6 & 1.243347 & 1.058812 & -- & -- & -- & -- & -- & -- \\
7 & 1.229318 & 1.078366 & -- & -- & -- & -- & -- & -- \\
8 & 1.214506 & 1.084615 & 1.024141 & -- & -- & -- & -- & -- \\
9 & 1.200695 & 1.085595 & 1.035149 & -- & -- & -- & -- & -- \\
10 & 1.188280 & 1.084320 & 1.040361 & 1.012226 & -- & -- & -- & -- \\
11 & 1.177240 & 1.082068 & 1.042719 & 1.018839 & -- & -- & -- & -- \\
12 & 1.167434 & 1.079426 & 1.043572 & 1.022554 & 1.007037 & -- & -- & -- \\
13 & 1.158700 & 1.076677 & 1.043595 & 1.024651 & 1.011279 & -- & -- & -- \\
14 & 1.150889 & 1.073959 & 1.043150 & 1.025794 & 1.013920 & 1.004417 & -- & -- \\
15 & 1.143869 & 1.071340 & 1.042441 & 1.026351 & 1.015585 & 1.007290 & -- & -- \\
16 & 1.137530 & 1.068850 & 1.041587 & 1.026538 & 1.016632 & 1.009206 & 1.002953 & -- \\
17 & 1.131778 & 1.066499 & 1.040660 & 1.026488 & 1.017272 & 1.010504 & 1.004984 & -- \\
18 & 1.126535 & 1.064288 & 1.039702 & 1.026284 & 1.017638 & 1.011386 & 1.006409 & 1.002071 \\
19 & 1.121736 & 1.062212 & 1.038741 & 1.025980 & 1.017816 & 1.011983 & 1.007424 & 1.003558 \\
20 & 1.117326 & 1.060264 & 1.037794 & 1.025612 & 1.017863 & 1.012377 & 1.008152 & 1.004643 \\
\hline
\end{tabular}
\end{table}

Several patterns are evident. For fixed $n$, thresholds decrease with $k$: 
$r_1(n) > r_2(n) > \cdots$ (Theorem~\ref{thm:monotonicity}). For fixed $k=1$, 
thresholds decrease monotonically with $n$, but for $k \geq 2$, thresholds are 
not monotone in $n$; for instance, $r_2(9) > r_2(8)$. All thresholds converge 
to $1$ as $n \to \infty$ (Theorem~\ref{thm:convergence}).

Table~\ref{tab:n50} shows all threshold values for $n = 50$.

\begin{table}[!ht]
\centering
\caption{All threshold values $r_k(n)$ for $n=50$.}
\label{tab:n50}
\begin{tabular}{c|c||c|c||c|c||c|c}
\toprule
$k$ & $r_k(n)$ & $k$ & $r_k(n)$ & $k$ & $r_k(n)$ & $k$ & $r_k(n)$ \\
\midrule
1 & 1.059214 & 7 & 1.007883 & 13 & 1.003032 & 19 & 1.001136 \\
2 & 1.031923 & 8 & 1.006577 & 14 & 1.002619 & 20 & 1.000908 \\
3 & 1.021270 & 9 & 1.005558 & 15 & 1.002257 & 21 & 1.000694 \\
4 & 1.015563 & 10 & 1.004741 & 16 & 1.001934 & 22 & 1.000488 \\
5 & 1.012020 & 11 & 1.004071 & 17 & 1.001644 & 23 & 1.000290 \\
6 & 1.009616 & 12 & 1.003510 & 18 & 1.001380 & 24 & 1.000096 \\
\bottomrule
\end{tabular}
\end{table}

\subsection*{Verification methodology}

\paragraph{Exhaustive enumeration for small $n$.}
For $n \leq 21$, we performed exhaustive enumeration of all $2^{n-1}$ distinct cuts 
and computed their weights on a uniform grid over $(1.001, 1.999)$ with mesh size 
$\Delta r = 0.001$ (1998 evaluation points). For each $(n, r)$ pair, we verified 
that the maximum weight is achieved by a $k$-isolated cut. The conjecture holds 
for $n \geq 7$; the counterexamples for $n \in \{4, 5, 6\}$ are characterized 
completely in Section~\ref{sec:global}. Note that the uniform mesh becomes coarser 
than the spacing between consecutive thresholds for large $k$ (e.g., 
$r_{23}(50) - r_{24}(50) \approx 2 \times 10^{-4}$); in these regions the 
near-isolated verification provides the primary evidence.

\paragraph{Near-isolated verification for larger $n$.}
For $n > 21$, exhaustive enumeration becomes infeasible. However, the analysis 
in Section~\ref{sec:global} identifies near-isolated cuts $S^*_k = \{1, \ldots, k, n\}$ 
as the only non-isolated cuts that achieve global optimality for $n \leq 6$. 
We therefore performed targeted verification: for each $n$ from 7 to 100, we 
computed all thresholds $r_k(n)$ and verified that every near-isolated cut 
$S^*_j$ ($j = 1, \ldots, n-2$) is beaten by the optimal isolated cut at 
20 uniformly spaced points within each threshold interval $(r_k(n), r_{k-1}(n))$.

\paragraph{Numerical considerations.}
Two technical issues required careful handling:
\begin{itemize}
\item Thresholds $r_k(n)$ for large $k$ can be extremely close to 1; for instance, 
$r_{24}(50) - 1 \approx 10^{-4}$. The root-finding algorithm must therefore 
search in the interval $(1 + 10^{-9}, 2)$ rather than starting at $r = 1.001$.

\item For large $n$, direct weight computation causes numerical overflow 
(e.g., $r^{N-1}$ with $N = \binom{100}{2} = 4950$). Let $W = \sum_i r^{e_i}$ be the weight of a cut, where $e_i$ are the exponents 
of its crossing edges. Weights are computed as 
\[
\log W = \max_i(e_i \log r) + \log\sum_i \exp((e_i - \max_j e_j)\log r),
\]
and comparisons are performed in log-space.
\end{itemize}

\paragraph{Results.}
No violations were found for any $n$ from 7 to 100. Combined with the 
exhaustive verification for $n \leq 21$, this provides strong computational 
evidence for Conjecture~\ref{conj:global}.

\paragraph{Code availability.}
All verification code is available at:\\ \url{https://github.com/nevmaric/lexicut}.

\section*{Acknowledgements}
The author is thankful to Tatjana Davidovi\'c and Dragan Uro\v{s}evi\'c for insightful discussions.

\printbibliography

\end{document}